\newcommand{\comm}[1]{}
\DeclareRobustCommand{\SkipTocEntry}[5]{}
\numberwithin{subsection}{section}
\newcommand{\thistheoremname}{}
\newtheorem*{genericthm}{\thistheoremname}
\newcommand{\ZFC}{\mathsf{ZFC}}
\newcommand{\ZF}{\mathsf{ZF} }
\newcommand{\CH}{\mathsf{CH} }
\newcommand{\DC}{\mathsf{DC}}
\newcommand{\Col}{\ensuremath{\operatorname{Col}} }
\newcommand{\BS}{{}^\omega\omega}
\newcommand{\SIGMA}{\boldsymbol\Sigma}
\newcommand{\CC}{\mathbb{C}} 
\newcommand{\PP}{\mathbb{P}} 
\newcommand{\QQ}{\mathbb{Q}} 
\newcommand{\RR}{\mathbb{R}} 
\newcommand{\CF}{\mathbb{F}}
\newcommand{\Add}{\mathrm{Add}}
\newtheorem{theorem}{Theorem}[section]
\newtheorem*{theorem*}{Theorem} 
\newtheorem{lemma}[theorem]{Lemma}
\newtheorem{question}[theorem]{Question}
\theoremstyle{definition}
\newcounter{cl}
\newtheorem{claim}[cl]{Claim}
\newtheorem*{claim*}{Claim}
\newtheorem*{subclaim*}{Subclaim}
\newtheorem{definition}[theorem]{Definition}
\newtheorem{fact}[theorem]{Fact}
\theoremstyle{remark}
\newtheorem{case}{Case}
\newtheorem*{case*}{Case}
\newtheorem*{subcase*}{Subcase}
\newenvironment{enumerate-(a)}{\begin{enumerate}[label={\upshape (\alph*)}, leftmargin=2pc]}{\end{enumerate}}
\newenvironment{enumerate-(a)-r}{\begin{enumerate}[label={\upshape (\alph*)}, leftmargin=2pc,resume]}{\end{enumerate}}
\newenvironment{enumerate-(A)}{\begin{enumerate}[label={\upshape (\Alph*)}, leftmargin=2pc]}{\end{enumerate}}
\newenvironment{enumerate-(A)-r}{\begin{enumerate}[label={\upshape (\Alph*)}, leftmargin=2pc,resume]}{\end{enumerate}}
\newenvironment{enumerate-(i)}{\begin{enumerate}[label={\upshape (\roman*)}, leftmargin=2pc]}{\end{enumerate}}
\newenvironment{enumerate-(i)-r}{\begin{enumerate}[label={\upshape (\roman*)}, leftmargin=2pc,resume]}{\end{enumerate}}
\newenvironment{enumerate-(I)}{\begin{enumerate}[label={\upshape (\Roman*)}, leftmargin=2pc]}{\end{enumerate}}
\newenvironment{enumerate-(I)-r}{\begin{enumerate}[label={\upshape (\Roman*)}, leftmargin=2pc,resume]}{\end{enumerate}}
\newenvironment{enumerate-(1)}{\begin{enumerate}[label={\upshape (\arabic*)}, leftmargin=2pc]}{\end{enumerate}}
\newenvironment{enumerate-(1)-r}{\begin{enumerate}[label={\upshape (\arabic*)}, leftmargin=2pc,resume]}{\end{enumerate}}
\title{Uniformization and Internal Absoluteness}
\author{Sandra M\"uller} \address[Sandra M\"uller]{Sandra M\"uller, Institut f\"ur Diskrete Mathematik und Geometrie, TU Wien, Wiedner Hauptstra{\ss}e 8-10/104, 1040 Wien, Austria.}
\email{sandra.mueller@tuwien.ac.at}
\thanks{We would like to thank Joan Bagaria for sending us a preliminary version of \cite{Ba} and Menachem Magidor for a discussion about \cite[Theorem 2.1]{FeWo92}. 
We are grateful to the referee for detailed comments for improving the paper.} 
\thanks{The first-listed author gratefully acknowledges funding from L'OR\'{E}AL Austria, in collaboration with the Austrian UNESCO Commission and 
the Austrian Academy of Sciences - Fellowship \emph{Determinacy and Large Cardinals} and the support of the FWF Elise Richter grant number V844. 
This project has received funding from the European Union’s Horizon 2020 research and innovation programme under the Marie Sk\l odowska-Curie grant agreement No 794020 (Project IMIC) 
of the second-listed author. 
He also greatefully acknowledges partial support from FWF grant number I4039. 
This research was funded in whole or in part by EPSRC grant number EP/V009001/1 of the second-listed author.  For the purpose of open access, the authors have applied a ‘Creative Commons Attribution' (CC BY) public copyright licence to any Author Accepted Manuscript (AAM) version arising from this submission.
}
\author{Philipp Schlicht} \address[Philipp Schlicht]{School of Mathematics, 
University of Bristol, 
Fry Building.
Woodland Road, 
Bristol, BS8~1UG, UK \\ 
and 
Universit\"at Bonn, Mathematisches Institut, Endenicher Allee 60, 53115 Bonn, Germany}
\email{philipp.schlicht@bristol.ac.uk} 
\keywords{Uniformization, generic absoluteness, idealized forcing, universally Baire} 
\subjclass{03E15 (Primary), 03E57 (Secondary)}
\date{\today}
\begin{document}

\begin{abstract} 






Measurability with respect to ideals is 
tightly connected with absoluteness principles for certain forcing notions. We study a uniformization principle that postulates the existence of a uniformizing function on a large set, relative to a given ideal. We prove that for all $\sigma$-ideals $I$ such that the ideal forcing $\PP_I$ 
of Borel sets modulo $I$ is proper, this uniformization principle is equivalent to an absoluteness principle for projective formulas with respect to $\PP_I$ that we call \emph{internal absoluteness}. In addition, we show that it is equivalent to measurability with respect to $I$ together with $1$-step absoluteness for the poset $\PP_I$. These equivalences are new even for Cohen and random forcing and they are, to the best of our knowledge, the first precise equivalences between regularity and absoluteness beyond the second level of the projective hierarchy.






\end{abstract} 

\maketitle 


\section{Introduction}

The study of the connections between generic absoluteness and regularity properties has been a central theme in set theory in the past decades. For example, many results have appeared in work of Bagaria, Brendle, Halbeisen, Ikegami, Judah and others on regularity properties at the first levels of the projective hierarchy \cite{Ba91, BaJu95, Ba06, Br93, HaJu96, ikegami2009, ikegami2010, Ju93}, see \cite{Ba} for an overview of these results with background on generic absoluteness. Moreover, this is closely related to the work of Feng, Magidor and Woodin on universally Baire sets \cite{FeWo92}. For example, Bagaria showed that $\Sigma^1_3$ absoluteness for Cohen forcing is equivalent to the Baire property for all ${\bf\Delta}^1_2$ sets of reals  \cite{Ba91}.\footnote{See also \cite[Section 5]{Ba06} and \cite[Theorem 1.1]{ikegami2010}.} 
Analogously, $\Sigma^1_3$ absoluteness for Random forcing is equivalent to the statement that all ${\bf\Delta}^1_2$ sets of reals are Lebesgue measurable \cite{Ba91}.

The regularity property we study here is uniformization up to a small set, where ``small'' refers to being in some fixed ideal $I$. This has, for example, already been considered by Solovay in \cite[Theorem 1]{MR0265151} for the ideal of meager sets and the ideal of null sets. Moreover, Shelah showed in \cite{Sh84} that projective uniformization up to a meager set holds in his model for ``all sets of reals have the Baire property and $\CH$ fails'' (that is obtained as a forcing extension of a $\ZFC$ model), see also \cite{JR93}. 
Zapletal studied a more general notion for ideals $I$ in the context of idealized forcing \cite[Proposition 2.3.4]{MR2391923}. 
Uniformization up to Ramsey-small sets also plays a crucial role in Schrittesser's and T\"ornquist's celebrated result on infinite maximal almost disjoint families \cite{schrittesser2019ramsey}. 

In general, we define projective uniformization up to a small set as follows. 
It is the special case of the next definition where $\Gamma$ denotes the class of projective sets. 
We let $I$ always denote a $\sigma$-ideal on the Borel subsets of a Polish space $X$ that contains all singletons. By $\PP_I$ we denote the preorder \emph{mod $I$} on the $I$-positive Borel sets, i.e., those not in $I$. 
Moreover, let $p[R]=\{ x\in X\mid \exists y\ (x,y) \in R \}$ denote the projection of a relation $R\subseteq X\times X$. 


\begin{definition} \label{def:projective uniformization up to I}
For any $\sigma$-ideal $I$ on a Polish space $X$ and any class $\Gamma$ of subsets of $X$, \emph{$\Gamma$-uniformization up to $I$} denotes the statement: \smallskip \\ 
For any binary relation $R\in \Gamma$ on $X$ and for any Borel set $A \notin I$, there is a Borel subset $B\notin I$ of $A$ such that either: 
\begin{enumerate-(1)} 
\item 
$B$ is disjoint from $p[R]$, or otherwise 
\item 
$B$ is a subset of $p[R]$ and there is a Borel measurable function $f\colon B\rightarrow X$ whose graph is a subset of $R$. 
\end{enumerate-(1)} 
\end{definition} 

The main result of this paper is that projective uniformization up to $I$ is equivalent to an absoluteness principle for the forcing $\PP_I$ for all $\sigma$-ideals $I$ such that $\PP_I$ is proper. More precisely, we consider a principle of \emph{internal projective absoluteness} that postulates projective absoluteness between the universe and generic extensions of countable elementary submodels of some $H_\theta$. This is defined more formally in Definition \ref{def:IA} below. Variants of this notion for the class of all forcings were used in proofs in inner model theory, see \cite[\textsection 5]{St09}, and more recently for the class of all proper forcings in Neeman's and Norwood's triangular embedding theorem \cite[Theorem 22]{NeemanNorwood2019}. 
Chan and Magidor implicitly use instances of internal projective absoluteness established through tree representations \cite{chan2022relation}. 
We rediscovered this principle for an application to selectors for ideals \cite{MSSW}. 
Moreover, it turns out that it is equivalent to a variant of universal Baireness for formulas (see Theorem \ref{thm:IAEquivTreeable}).

We prove that in addition to internal absoluteness, also 1-step absoluteness together with $I$-measurability is equivalent to uniformization up to $I$.

\begin{definition}[Khomskii \cite{Kh12}]  
Let $I$ be a $\sigma$-ideal on a Polish space $X$ and let $A \subseteq X$. We say $A$ is \emph{$I$-measurable}\footnote{This is often called \emph{$I$-regular} in the literature.
} if for every Borel set $B \notin I$, there is some Borel set $C\notin I$ with $C\subseteq B$ and either $C\cap A=\emptyset$ or $C\subseteq A$. 
\end{definition}

To see that projective uniformization up to $I$ implies $I$-measurability for all projective sets, take a projective set $A$ and an arbitrary Borel set $B \notin I$ and apply uniformization up to $I$ to the characteristic function 
of $A$ restricted to $B$. 
For more details, see the proof of Theorem \ref{main level-by-level equivalence}.

We can now state the main result of this article.
 
 \begin{theorem} \label{characterization of internal absoluteness via 1-step absoluteness and regularity} 
The following statements are equivalent for all proper forcings $\PP_I$: 
\begin{enumerate-(1)}
\item 
\label{characterization of internal absoluteness via 1-step absoluteness and regularity 1} 
 Internal projective $\PP_I$-absoluteness holds. 
\item 
\label{characterization of internal absoluteness via 1-step absoluteness and regularity 2} 
Projective uniformization holds up to sets in $I$. 
\item 
\label{characterization of internal absoluteness via 1-step absoluteness and regularity 3} 
$1$-step $\PP_I$-absoluteness holds and all projective sets are $I$-measurable. 
\end{enumerate-(1)} 
\end{theorem}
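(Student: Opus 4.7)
The plan is to establish the cycle $(3) \Rightarrow (1) \Rightarrow (2) \Rightarrow (3)$. For $(3) \Rightarrow (1)$, I would fix a countable $N \prec H_\theta$ containing the parameters of a projective formula $\phi$ and a $\PP_I^N$-generic $g$ over $N$. Since $1$-step $\PP_I$-absoluteness is a scheme holding in $V$, each instance reflects to $N$ via $N \prec H_\theta$; and projective truth is absolute between $V$ and $N$ because $H_\theta$ contains all reals. Combining these,
\[
V \models \phi \ \Leftrightarrow\ N \models \phi \ \Leftrightarrow\ \PP_I^N \Vdash^N \phi \ \Leftrightarrow\ N[g] \models \phi,
\]
yielding internal projective $\PP_I$-absoluteness. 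This direction does not use $I$-measurability.

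For $(1) \Rightarrow (2)$, I would first derive $I$-measurability of projective sets from internal absoluteness: given projective $S$ and Borel $A \notin I$, take countable $N \prec H_\theta$ with $S, A \in N$ and a $B \subseteq A$ in $\PP_I^N$ that decides $\dot x \in S$. The set $D$ of non-$N$-generic reals, given by the countable union over maximal antichains $M \in N$ of $X \setminus \bigcup M$, is Borel and belongs to $I$. Internal absoluteness forces every $x \in B \setminus D$ to agree with the decided value, so $B \setminus D$ is an $I$-positive Borel subset of $S$ or of its complement. For uniformization itself, given projective $R$ and Borel $A \notin I$ with $R, A \in N$, either some $B \subseteq A$ in $\PP_I^N$ forces $\dot x \notin p[R]$, in which case $I$-measurability of $p[R] \cap B$ returns the disjoint Borel subset demanded by uniformization (the alternative $B' \subseteq p[R]$ contradicts internal absoluteness for $N$-generic reals in $B'$); or some $B$ forces $\exists y\, R(\dot x, y)$, in which case properness of $\PP_I$ codes the forced witness as a Borel function $f : B' \to X$ on some $I$-positive $B' \subseteq B$, and $I$-measurability of $\{x \in B' : R(x, f(x))\}$ together with internal absoluteness produces a Borel refinement on which the graph of $f$ lies in $R$.

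For $(2) \Rightarrow (3)$, $I$-measurability of projective sets is immediate from uniformization up to $I$ applied to the $\{0,1\}$-valued projective relation $R(x,y) = (x \in S \wedge y = 0) \vee (x \notin S \wedge y = 1)$. For $1$-step absoluteness I would prove the following key lemma by induction on projective complexity: \emph{for every projective $\psi(\bar z)$ with parameters in $V$, every Borel $A \notin I$, and every finite tuple $\bar f$ of Borel functions $A \to X$, there is a Borel $B \subseteq A$ with $B \notin I$ on which $\psi^V(\bar f(x))$ has constant truth value, and $B$ forces $\psi(\bar f(\dot x))$ to have that same value.} The $\Sigma^1_{n+1}$ step writes $\psi = \exists z\, \psi'$ and applies uniformization up to $I$ to the relation $R(x,z) := \psi'(z, \bar f(x))$ on $A$. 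The positive alternative produces a Borel uniformizer $g$, and the inductive hypothesis for $\psi'$ on the enlarged tuple $(g, \bar f)$ decides $\psi$ as true on both sides. In the empty alternative, any condition $B' \subseteq B$ forcing $\psi(\bar f(\dot x))$ would, via properness, provide a Borel $h : B'' \to X$ with $B'' \Vdash \psi'(h(\dot x), \bar f(\dot x))$; the inductive hypothesis for $\psi'$ on $(h, \bar f)$ would then yield a Borel refinement of $B$ on which $\psi(\bar f(x))$ holds pointwise, contradicting the empty alternative. Hence $B$ forces $\neg \psi(\bar f(\dot x))$. A density argument on $\PP_I$ translates the lemma into $1$-step absoluteness.

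The main obstacle is the key lemma for $(2) \Rightarrow (3)$. Its statement has to allow Borel function arguments so that at the next quantifier level one can absorb both the uniformizer from uniformization up to $I$ and the Borel code of the forced witness supplied by properness, and the empty-alternative step must combine these two mechanisms with the inductive hypothesis to derive the required contradiction.
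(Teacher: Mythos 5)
Your step $(3) \Rightarrow (1)$ has a genuine gap, and it sits precisely where the hypothesis you discard --- $I$-measurability --- is needed. Internal projective $\PP_I$-absoluteness requires $\bar N[g] \models \varphi(x) \Leftrightarrow V \models \varphi(x)$ for \emph{every} real $x \in \bar N[g]$, including the generic real itself and reals of the form $\tau^g$ for names $\tau \in \bar N$. Your chain $V \models \varphi \Leftrightarrow N \models \varphi \Leftrightarrow \PP_I^N \Vdash^N \varphi \Leftrightarrow N[g] \models \varphi$ only treats parameters lying in $N$: reflecting the $1$-step absoluteness scheme into $N$ tells you what $N$ believes about its own forcing extensions, and the forcing theorem over $\bar N$ handles $\bar N[g]$, but neither step connects the truth value of $\varphi(\tau^{(x)})$ \emph{in $V$, at the actual real $x \in V$ generating $g$}, to the forcing relation over $\bar N$. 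That bridge is exactly what fails without measurability: one needs the projective set $S_{\varphi,\tau} = \{x : \varphi(\tau^{(x)})\}$ to be $I$-measurable in order to find densely many ``decisive'' conditions $A$ with $A \subseteq S_{\varphi,\tau}$ or $A \cap S_{\varphi,\tau} = \emptyset$, and only then does $1$-step $\Sigma^1_{n+1}$-absoluteness (applied to the statement $\exists x \in B\ \neg\varphi(\tau^{(x)})$) show that such an $A$ forces $\varphi(\tau)$ if and only if $\varphi(\tau^{(x)})$ holds in $V$ for each generic $x \in A$. This is the content of the paper's Claims \ref{set of decisive conditions is dense} and \ref{properties of decisive sets}; if your parameter-free reflection sufficed, $I$-measurability would follow from $1$-step absoluteness alone and would not appear as a separate conjunct in item \ref{characterization of internal absoluteness via 1-step absoluteness and regularity 3}.

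The other two legs are essentially sound. Your $(1) \Rightarrow (2)$ matches the paper's: split on whether some $B \leq A$ forces $\dot x \notin p[R]$ or forces $\exists y\, R(\dot x, y)$, and in the second case read the witness name $\tau$ off as the Borel function $x \mapsto \tau^{(x)}$ (your extra refinement by $I$-measurability of $\{x \in B' : R(x,f(x))\}$ is unnecessary, since internal absoluteness already gives $R(x,\tau^{(x)})$ in $V$ for every generic $x$). Your $(2) \Rightarrow (3)$ key lemma --- an induction on quantifier complexity carrying a tuple of Borel functions so as to absorb both the uniformizer and the name-induced witness function --- is a legitimate reorganization of the paper's route; the paper instead proves $(2) \Rightarrow (1)$ by an analogous induction over countable models and then derives $1$-step absoluteness from internal absoluteness. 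But since the cycle is broken at $(3) \Rightarrow (1)$, the proof as a whole does not go through; you need to insert the decisiveness argument, and with it the essential use of $I$-measurability, at that step.
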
 

We will in fact prove a level-by-level version of this theorem, i.e., internal $\Sigma^1_n$ absoluteness is equivalent to uniformization of $\SIGMA^1_n$ relations up to $I$ for all $n\geq 1$. 

In the final part of this paper, we look at examples of this equivalence for Cohen and random forcing that lead to consistency strength results for internal projective absoluteness for these forcing notions. Moreover, we briefly discuss the strength of internal projective absoluteness for the class of all forcings.

\section{Uniformization}




For the class of projective sets, the uniformization principle in Definition \ref{def:projective uniformization up to I} coincides with the notion studied by Solovay \cite[Theorem 1]{MR0265151} and Shelah \cite{Sh84} for the ideal $I$ of meager sets. 
The latter states that any projective relation $R$ on $X$ with $p[R]=X$ can be uniformized by a Borel measurable function on a comeager set. 
More generally, we call a class $\Gamma$ of subsets of $X$ and its finite products \emph{sufficiently closed} if $\Gamma$ is closed under images and preimages of projections, complements, finite unions, products and contains all singletons. 

\begin{lemma} 
\label{uniformisation for sufficiently closed classes} 
Suppose that $\Gamma$ is sufficiently closed. 
Then $\Gamma$-uniformization up to $I$ is equivalent to its restriction to relations $R$ with $p[R]=X$. 
%
\end{lemma}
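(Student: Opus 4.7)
The forward direction (general implies restricted) is trivial, so I focus on the converse. Given $R\in\Gamma$ and a Borel set $A\notin I$, my plan is to artificially enlarge $R$ to a relation $\tilde R\in\Gamma$ with $p[\tilde R]=X$ in a way that lets a Borel uniformizer of $\tilde R$ be decoded into the required dichotomy for $R$. The Polish space $X$ must be uncountable (otherwise every Borel set is in $I$ and the statement is vacuous), so I may fix a Borel partition $X=X_0\sqcup X_1$ together with a Borel isomorphism $\alpha_0\colon X\to X_0$.

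I would set
\[
\tilde R \;:=\; \bigl\{(x,\alpha_0(y)) : (x,y)\in R\bigr\} \;\cup\; \bigl((X\setminus p[R])\times X_1\bigr),
\]
so that $p[\tilde R]=X$: elements of $p[R]$ are witnessed through the first piece, while elements outside $p[R]$ receive dummy witnesses in $X_1$ through the second. Sufficient closure of $\Gamma$ gives $\tilde R\in\Gamma$: the second piece is built from $p[R]$ using complement, singletons, and products, while the first piece is a Borel recoding of $R$ obtained as a projection of the intersection of $R\times X$ with the Borel graph of $\mathrm{id}_X\times\alpha_0$. Applying the restricted uniformization principle to $\tilde R$ and $A$ then yields a Borel $\tilde B\subseteq A$ with $\tilde B\notin I$ and a Borel $\tilde f\colon\tilde B\to X$ whose graph lies in $\tilde R$; the ``disjoint from $p[\tilde R]$'' alternative of Definition~\ref{def:projective uniformization up to I} is vacuous here because $p[\tilde R]=X$.

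I would then split $\tilde B$ into the Borel pieces $C_0:=\tilde f^{-1}(X_0)$ and $C_1:=\tilde f^{-1}(X_1)$. By construction of $\tilde R$, every $x\in C_0$ satisfies $(x,\alpha_0^{-1}(\tilde f(x)))\in R$, so $C_0\subseteq p[R]$ and $f:=\alpha_0^{-1}\circ\tilde f|_{C_0}$ is a Borel uniformizer of $R$ on $C_0$; every $x\in C_1$ satisfies $x\notin p[R]$, so $C_1$ is disjoint from $p[R]$. Because $I$ is a $\sigma$-ideal and $\tilde B\notin I$, at least one of $C_0,C_1$ is $I$-positive, and setting $B$ to be that piece yields case~(2) or case~(1) of the definition respectively.

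The main delicate point is the verification that $\tilde R\in\Gamma$, i.e., that the Borel bookkeeping involving $\alpha_0$ does not take us out of the class. For the projective classes that motivate the lemma this is routine, since they contain all Borel sets and are closed under Borel preimages; for the general notion of sufficient closure it is a short unpacking of the closure under projections, complements, products, and singletons, together with the representation of Borel reparametrizations via graphs.
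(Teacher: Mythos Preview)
Your overall strategy---pad $R$ to a total relation with dummy witnesses, uniformize, then split the domain according to whether the uniformizer lands in the dummy part---is exactly the paper's strategy. The gap is in the verification that $\tilde R\in\Gamma$. The definition of ``sufficiently closed'' only gives you singletons together with closure under images and preimages of projections, complements, finite unions, and products. From these operations you cannot manufacture an arbitrary Borel set such as $X_1$, nor the graph of an arbitrary Borel isomorphism $\alpha_0$; the class generated this way need not contain all Borel sets. So neither piece of your $\tilde R$ is visibly in $\Gamma$: the product $(X\setminus p[R])\times X_1$ needs $X_1\in\Gamma$, and your ``projection of the intersection with the Borel graph of $\mathrm{id}_X\times\alpha_0$'' needs that graph to lie in $\Gamma$. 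Your final paragraph asserts this unpacking is short, but it does not go through for the abstract notion; it only works for classes (like the projective ones) that already contain all Borel sets.

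The paper sidesteps this by using a \emph{single point} $y_0$ as the dummy marker rather than a Borel half of $X$: one takes $S=R\cup\bigl((X\setminus p[R])\times\{y_0\}\bigr)$, which is in $\Gamma$ purely from singletons, products, complements, projections, and finite unions, and then splits the uniformizer's domain along $f^{-1}(\{y_0\})$. This requires a $y_0$ with $(x,y_0)\notin R$ for all $x$; if no such $y_0$ exists in $X$, the paper passes to the target space $X\times X$ (recall $\Gamma$ lives on all finite powers of $X$), where $R\times\{a\}$ still lies in $\Gamma$ and any point with second coordinate $\neq a$ serves as $y_0$. Your argument becomes correct if you replace the Borel partition by this single-point dummy.
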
 
\begin{proof} 
Suppose that $R$ is a relation in $\Gamma$. 
We can assume there exists some $y_0\in X$ such that $(x,y_0)\notin R$ for all $x\in X$, since we can otherwise replace $X$ with a larger space such as $X\times X$ and use that $\Gamma$ is closed under preimages of projections. 
Let $S=R\cup ((X \setminus p[R]) \times \{y_0\})$ and pick a Borel measurable uniformization $f\colon A\rightarrow X$ of $S$ up to $I$ for some Borel set $A\notin I$. 
Now split $A$ into the Borel sets $A_0=f^{-1}[\{y_0\}]$ and $A_1=f^{-1}[X\setminus \{y_0\}]$. 
Since $A\notin I$ and $I$ is an ideal, one of $A_0$, $A_1$ is not in $I$. 
This shows uniformization of $R$ up to $I$. 
\end{proof}

\begin{lemma} 
Projective uniformization up to meager sets coincides with Solovay's notion from \cite[Theorem 1]{MR0265151}. 
More generally, suppose that $\Gamma$ is sufficiently closed and $\PP_I$ is c.c.c. 
Then $\Gamma$-uniformization up to $I$ is equivalent to the statement: \smallskip \\ 
Any relation $R$ in $\Gamma$ with $p[R]=X$ can be uniformized by a Borel measurable function $f\colon A\rightarrow X$, where $A$ is a Borel set with $X\setminus A\in I$. 
%
\end{lemma}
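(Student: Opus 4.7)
The plan is to prove both directions separately. The $(\Leftarrow)$ direction is essentially a restriction argument: assuming the Solovay-style statement, we derive $\Gamma$-uniformization up to $I$ by first invoking Lemma \ref{uniformisation for sufficiently closed classes} to reduce to relations $R$ with $p[R]=X$; for such $R$ and a given Borel $A\notin I$, we take the provided uniformization $f\colon A'\to X$ with $X\setminus A'\in I$ and restrict it to $B=A\cap A'$. This $B$ is Borel, $A\setminus B\in I$ (so $B\notin I$ since $I$ is an ideal containing $A\setminus B$ but not $A$), and $f|_B$ is the required Borel uniformization of $R$ on $B$.

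For the converse, fix $R\in \Gamma$ with $p[R]=X$ and let $\mathcal{F}$ be the collection of $I$-positive Borel sets $B$ admitting a Borel measurable uniformization $f_B\colon B\to X$ of $R$. The hypothesis of $\Gamma$-uniformization up to $I$, together with $p[R]=X$ (which rules out alternative~(1) of Definition~\ref{def:projective uniformization up to I}), says precisely that $\mathcal{F}$ is dense in $\PP_I$. Using Zorn's lemma I will choose a maximal antichain inside $\mathcal{F}$; by the c.c.c.\ of $\PP_I$ this antichain is countable, so enumerate it as $\{B_n:n<\omega\}$ with uniformizations $f_n\colon B_n\to X$. A standard disjointification $B_n'=B_n\setminus\bigcup_{m<n}B_m$ yields pairwise disjoint Borel sets whose union equals $\bigcup_n B_n$, and the pasted map $f\colon \bigcup_n B_n'\to X$ defined by $f|_{B_n'}=f_n|_{B_n'}$ will be Borel measurable and uniformize $R$ on its domain.

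The main obstacle will be showing that $X\setminus\bigcup_n B_n\in I$. For this I will check that the antichain chosen inside $\mathcal{F}$ is in fact maximal in the full poset $\PP_I$: if some $I$-positive Borel set $C$ were incompatible with every $B_n$ (i.e.\ $C\cap B_n\in I$ for all $n$), then density of $\mathcal{F}$ would supply an element of $\mathcal{F}$ below $C$, and that element would enlarge the antichain inside $\mathcal{F}$, contradicting maximality. Hence no $I$-positive Borel set can be almost disjoint from $\bigcup_n B_n$; in particular its literal complement $X\setminus\bigcup_n B_n$ lies in $I$. The c.c.c.\ hypothesis is indispensable at this step: without countability of the antichain, the set $\bigcup_n B_n$ need not be Borel, and the construction of a single Borel uniformizing function on a conull Borel domain would collapse.
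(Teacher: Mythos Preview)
Your proof is correct and follows essentially the same strategy as the paper's: both build a countable antichain of $I$-positive Borel sets on which $R$ can be Borel-uniformized, use the c.c.c.\ to ensure countability, and paste the partial uniformizers together. The only cosmetic difference is that the paper constructs the antichain by transfinite recursion (greedily shrinking $X_\alpha=X\setminus\bigcup_{\beta<\alpha}A_\beta$ until it falls into $I$), whereas you invoke Zorn's lemma to obtain a maximal antichain inside the dense family $\mathcal{F}$ and then argue it is maximal in $\PP_I$; both routes yield the same conclusion.
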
 
\begin{proof} 
Suppose that $R$ is a relation in $\Gamma$. 
Construct a sequence $\langle X_\alpha, A_\alpha, f_\alpha \mid \alpha<\lambda\rangle$ for some countable ordinal $\lambda$ such that $A_\alpha\notin I$ is a Borel set, $X_\alpha=X\setminus \bigcup_{\beta<\alpha} A_\beta$, the graph of $f_\alpha\colon A_\alpha \rightarrow X$ is a subset of $R$ for all $\alpha<\lambda$, and $A_\alpha\cap A_\beta=\emptyset$ for all $\alpha<\beta<\lambda$. 
In step $\alpha$, define $f_\alpha$ by applying uniformization up to $I$ to $R$ and $X_\alpha$. 
Since $\PP_I$ is c.c.c., we have $X_\lambda\in I$ for some countable ordinal $\lambda$. 
Then $f=\bigcup_{\alpha<\lambda} f_\alpha$ unformizes $R$ on a set with complement in $I$. 
The converse follows from Lemma \ref{uniformisation for sufficiently closed classes}. 
\end{proof}


Uniformization up to small sets also plays a crucial role in Schrittesser's and T\"ornquist's recent result on maximal almost disjoint families \cite[Theorem 1.1]{schrittesser2019ramsey}. 
For the ideal $I_R$ of Ramsey null sets, one can see that uniformization up to $I_R$ coincides with the conjunction of $I_R$-measurability and the principle $R$-Unif defined in \cite[Section 2.3]{schrittesser2019ramsey}. 
Their main result can be thus restated as follows: Assuming $\ZF+ \DC$ and uniformization up to $I_R$ for all sets of reals, there is no infinite maximal almost disjoint family. 


In general, the uniformization principle in Definition \ref{def:projective uniformization up to I} can be equivalently formulated as the conjunction of the conditions: 
\begin{enumerate-(1)} 
\item 
All sets of the form $p[A]$ for $A\in \Gamma$ are $I$-measurable. 
\item 
If $R\in \Gamma$ is a relation and $A\notin I$ is a Borel subset of $p[R]$, then there is a Borel subset $B\notin I$ of $A$ and a Borel measurable function $f\colon B\rightarrow X$ whose graph is contained in $R$. 
\end{enumerate-(1)}

\section{Internal Absoluteness}

In this section we define and discuss internal absoluteness, the absoluteness principle that characterizes uniformization up to a small set. We will consider absoluteness for projective formulas with real parameters. Similar notions have been defined in \cite{St09} and independently in \cite{MSSW}.



\begin{definition}[\emph{Internal absoluteness}] \label{def:IA}
Let $\CF$ be a class of forcing notions. We say \emph{internal projective $\CF$-absoluteness} holds if and only if the following holds for any sufficiently large regular cardinal $\theta$ and a club of countable models $M\prec H_\theta$: 
if $\PP\in M\cap \CF$, $\bar{M}$ is the transitive collapse of $M$ and $\bar{\PP}$ is the image of $\PP$ under the collapse, then for all $\bar{\PP}$-generic filters $g \in V$ over $\bar M$,
 \[ \bar M[g] \models \varphi(x) \Longleftrightarrow H_\theta \models \varphi(x), \] for every projective formula\footnote{A formula is projective if its quantifiers range over reals.} $\varphi(v_0)$ and every real $x$ in $\bar M[g]$.
\end{definition}

We analogously define internal $\Sigma^1_n$ absoluteness for $n\geq 1$. For a 
notion of forcing $\PP$, we say that \emph{internal projective $\PP$-absoluteness} holds if $\CF=\{\PP\}$ in the previous definition, and \emph{internal projective absoluteness} if $\CF$ equals the class of all forcings. 
We will frequently use that one can replace $H_\theta\models \varphi(x)$ by $V\models \varphi(x)$ in the definition of internal absoluteness, since $\varphi$ is projective. 


The rest of this section is devoted to general properties of the internal absoluteness principle. Before we argue that internal absoluteness is consistent from large cardinals, we want to remark that internal absoluteness for all forcings $\PP$ is related to the notion of being universally Baire. Before we give some details on this, we recall its definition. 

\begin{definition}[Feng, Magidor, Woodin \cite{FeWo92}] 
A subset $A$ of a topological space $Y$ is \emph{universally Baire} if $f^{-1}(A)$ has the property of Baire in any topological space $X$, where $f\colon X\rightarrow Y$ is continuous.\footnote{Two definitions appear in \cite{FeWo92}, one with respect to all topological spaces and one with respect to topological spaces with a regular open basis, but no proof is provided that the two definitions are equivalent. 
See \cite{ElekesPalfy2021} for a proof. 
As a historical note, Menachem Magidor attributed the definition of universally Baire to Schilling and Vaught \cite{MR704624} in a discussion in 2019. 
However, the first explicit definition in the literature is to our knowledge due to Feng, Magidor and Woodin.} 
\end{definition} 

The following equivalence provides a formulation of being universally Baire that has proven to be very useful in set theory, see also \cite[Definition 32.21]{Je03} or \cite[Definition 8.6]{Sch14}. To state the equivalence, we need another definition.

\begin{definition}
    Let $S$ and $T$ be trees on $\omega \times \kappa$ for some ordinal $\kappa$ and let $\lambda$ be an ordinal. We say  \emph{$(S,T)$ is $\lambda$-absolutely complementing} if \[ p[S] = \BS \setminus p[T] \] in every $\Col(\omega,\lambda)$-generic extension of $V$.
\end{definition}


\begin{lemma}\cite{FeWo92, ElekesPalfy2021}
A set of reals $A$ is universally Baire if and only if for every ordinal $\lambda$, there are $\lambda$-absolutely complementing trees $(S,T)$ with $p[S] = A$.
\end{lemma}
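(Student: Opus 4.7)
The plan is to prove both directions of the equivalence separately, with the substantial work going into the construction of trees from Baire property data.

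For the easy direction ($\Leftarrow$), suppose $(S,T)$ are $\lambda$-absolutely complementing with $p[S] = A$ for every $\lambda$. Given a topological space $X$ with a regular open basis of cardinality at most some $\lambda$ and a continuous $f\colon X\to \BS$, I want to show $f^{-1}(A)$ has the Baire property. The sets $p[S]$ and $p[T]$ admit Souslin-style representations indexed by the nodes of $S$ and $T$, so their $f$-preimages are unions of sets built from basic opens in $X$. Approximating these by countably many open sets and using the nowhere-density of the small-rank residues (standard for $\kappa$-Souslin operations) shows that $f^{-1}(p[S])$ and $f^{-1}(p[T])$ each differ from an open set by a meager set. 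Since $p[S]\cup p[T] = \BS$ holds in $V$ itself (the $\lambda=0$ instance of absolute complementation), the two open approximations cover $X$ modulo meager, yielding the Baire property of $f^{-1}(A)$.

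For the hard direction ($\Rightarrow$), fix $\lambda$ and work with the topological space $X_\lambda = \lambda^{\omega}$ carrying the product of the discrete topology, together with a fixed continuous surjection $\pi\colon X_\lambda \to \BS$ with dense image (obtained by a coding). Universal Baireness gives an open $U \subseteq X_\lambda$ and a sequence $\langle F_n : n<\omega\rangle$ of closed nowhere dense sets with $\pi^{-1}(A) \mathbin{\triangle} U \subseteq \bigcup_n F_n$; symmetrically, fix $U'$ and $\langle F_n' : n<\omega\rangle$ for $\BS\setminus A$. I would then build a tree $S$ on $\omega\times\lambda$ whose branches simultaneously code $x\in\BS$, a point $y\in X_\lambda$ with $\pi(y)=x$, a descending sequence of basic open conditions witnessing $y\in U$, and a nested family of further refinements witnessing $y\notin F_n$ for every $n$ (nowhere density guarantees that the required refinements exist inside $\lambda^{<\omega}$). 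The tree $T$ is defined in the same way from $U'$ and the $F_n'$. In $V$ this yields $p[S]=A$ and $p[T] = \BS\setminus A$. In any $\Col(\omega,\lambda)$-generic extension $V[g]$, the generic filter canonically produces a point $y_g\in X_\lambda$ that avoids every $V$-nowhere dense closed subset, hence $y_g\in U \Leftrightarrow \pi(y_g)\in A^{*}$, where $A^{*}$ denotes the canonical reinterpretation of $A$; consequently $p[S]^{V[g]} = A^{*}$ and $p[T]^{V[g]} = \BS^{V[g]}\setminus A^{*}$, which is the desired $\lambda$-absolute complementation.

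The central technical obstacle lies in the tree construction of ($\Rightarrow$): a single branch through $S$ must simultaneously encode the generic point $y$, certify via basic opens that $y$ enters $U$, and \emph{on the fly} defeat each of the countably many $F_n$, all while the auxiliary coordinate stays on the index set $\lambda$ rather than a larger set. The correct data to attach at each node is obtained by fixing a uniform enumeration of a dense base for the complement of each $F_n$ and interleaving refinement steps across the $F_n$; this is bookkeeping that is standard once set up but fragile, and it is the place where the cardinality estimate on the second coordinate of the trees must be verified. The rest of the proof is then the translation between topological genericity in $X_\lambda$ and forcing-theoretic genericity for $\Col(\omega,\lambda)$, together with routine absoluteness of the Souslin representations used in the other direction.
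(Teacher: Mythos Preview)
The paper does not supply a proof of this lemma; it is simply quoted with citations to \cite{FeWo92} and \cite{ElekesPalfy2021}. Your outline follows the standard Feng--Magidor--Woodin strategy, and the backward direction (closure of the Baire property under the $\kappa$-Souslin operation, applied to $f^{-1}(p[S])$ and $f^{-1}(p[T])$) is essentially correct.

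In the forward direction there is a genuine gap. The appeal to a ``canonical reinterpretation $A^*$'' of $A$ in $V[g]$ is circular: an arbitrary set of reals has no canonical extension to a generic extension, and supplying one is exactly what the trees $S$ and $T$ are meant to do. What you must verify directly is that $p[S]\cup p[T]=\BS$ and $p[S]\cap p[T]=\emptyset$ hold in $V[g]$. Disjointness is in fact free once it holds in $V$, since it is equivalent to the well-foundedness of the tree of pairs of branches with a common first coordinate, and well-foundedness is absolute; so that is not the issue. The real problem is covering. Your argument produces a \emph{single} generic point $y_g\in\lambda^\omega$ and concludes that $\pi(y_g)\in p[S]\cup p[T]$, but you need this for \emph{every} real $x\in V[g]$, and a single fixed continuous $\pi$ does not obviously achieve that: there is no reason the fibre $\pi^{-1}(\{x\})$ should contain a point avoiding all of the $F_n$, $F_n'$ when $x\notin V$. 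The Feng--Magidor--Woodin construction instead ranges over \emph{all} $\Col(\omega,\lambda)$-names $\tau$ for reals, uses for each $\tau$ the associated continuous evaluation map $f_\tau$ on a comeager subset of $\lambda^\omega$, and amalgamates the Baire-category data of the sets $f_\tau^{-1}(A)$ into one pair of trees. Then, given any $x\in V[g]$, one chooses $\tau$ with $\tau^g=x$ and uses the generic $g$ itself as the witnessing branch in the $\tau$-component of the tree. Your bookkeeping paragraph is aimed at the right place, but the single-map setup you fix at the outset is too coarse to carry it.
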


The above notions of internal absoluteness are closely connected to the following strengthening of being universally Baire.

\begin{definition} 
For a forcing notion $\PP$ we say a formula $\varphi(v_0,v_1)$ is \emph{treeable with respect to $\PP$} if and only if for every parameter $a \in V$ there are trees $S$ and $T$ such that in every $\PP$-generic extension $V[G]$ of $V$, \[ p[S] = \{ x \in \BS \mid \varphi(x,a)\} \text{ and } p[T] = \{ x \in \BS \mid \neg\varphi(x,a)\}.\] 
Moreover, we say $\varphi(v_0,v_1)$ is \emph{treeable} if it is treeable with respect to $\PP$ for all forcing notions $\PP$.
\end{definition} 

It is easy to see that if all projective formulas are treeable, then all projective sets are universally Baire. The converse is a well-known open question, see \cite[Question 1, Section 6]{FeWo92}.
\cite[Question 1, Section 6]{FeWo92} also asks whether projective absoluteness follows from the statement that all projective sets are universally Baire. The converse was asked in \cite[Introduction]{Wi17}. Internal absoluteness is a natural absoluteness principle that is equivalent to the statement that all projective formulas are treeable. See \cite[Lemma 5.1]{St09} for a proof of the following theorem due to Steel and Woodin. Their proof in fact also shows the ``local version'' of this theorem, i.e.,that internal projective $\PP$-absoluteness holds if and only if all projective formuals are treeable with respect to $\PP$.

\begin{theorem}\label{thm:IAEquivTreeable}
Internal projective absoluteness holds for $\PP$ if and only if all projective formulas are treeable with respect to $\PP$.
\end{theorem}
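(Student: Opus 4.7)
The plan is to prove both directions, using transitive collapses and elementarity for one implication and a Martin-Solovay-style tree construction for the other.

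For the direction $(\Leftarrow)$, suppose every projective formula is treeable with respect to $\PP$. Given $\varphi(v_0,v_1)$ and a parameter $a$, let $S$ and $T$ be trees witnessing treeability for $\varphi(\cdot,a)$ and $\neg\varphi(\cdot,a)$. Pick a countable $M \prec H_\theta$ containing $\PP, a, S, T$, let $\bar M$ be its transitive collapse, and let $\bar\PP, \bar S, \bar T$ be the images. By elementarity, $\bar M$ believes $(\bar S, \bar T)$ are $\bar\PP$-absolutely complementing witnesses for $\varphi(\cdot,a)$, so for any $g \in V$ that is $\bar\PP$-generic over $\bar M$, inside $\bar M[g]$ the set $p[\bar S]$ equals the reals satisfying $\varphi(\cdot,a)$ and $p[\bar T]$ its complement. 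The inverse collapse $\pi \colon \bar M \to M$ is an elementary embedding, and any branch $f \in \bar M[g]$ through $\bar S$ above a real $x \in \bar M[g]$ maps via $\pi$ (applied finite level by finite level) to a branch $\pi \circ f \in V$ through $S$ above $x$. This shows $p[\bar S]^{\bar M[g]} \subseteq p[S]^V$ and similarly $p[\bar T]^{\bar M[g]} \subseteq p[T]^V$. Since $p[S]$ and $p[T]$ already partition $\BS$ in $V$, the truth value of $\varphi(x,a)$ must match between $\bar M[g]$ and $V$ for every real $x \in \bar M[g]$, which is internal projective $\PP$-absoluteness.

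For the direction $(\Rightarrow)$, assume internal projective $\PP$-absoluteness and fix $\varphi$, $a$ as above. Build a tree $S$ whose branches code: a countable $\in$-structure on $\omega$ equipped with Shoenfield-style ordinal ranks certifying its well-foundedness (so that its transitive collapse is a genuine countable transitive $\bar M$), distinguished elements interpreting $\PP$ and $a$ in $\bar M$, a filter $g$ on the image of $\PP$ that is $\bar\PP$-generic over $\bar M$, and a real $x$ realized by a $\bar\PP$-name in $\bar M$ such that $\bar M[g] \models \varphi(x,a)$. Define $T$ dually using $\neg\varphi$. The inclusion $\{x : V \models \varphi(x,a)\} \subseteq p[S]^V$ follows from internal absoluteness: take $M \prec H_\theta$ from the good club with $x, \PP, a \in M$, collapse, and choose any $g \in V$ that is $\bar\PP$-generic over $\bar M$, so $\bar M[g] \models \varphi(x,a)$ by the absoluteness principle, producing a branch through $S$. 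The reverse inclusion $p[S]^V \subseteq \{x : V \models \varphi(x,a)\}$ is obtained together with the symmetric statement for $T$: any branches through $S$ and $T$ above a common real can be absorbed into one countable $M^* \prec H_\theta$, to which internal absoluteness is applied to derive a contradiction with $V \models \neg\varphi(x,a)$.

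The main obstacle is upgrading these projection identities from $V$ to every $\PP$-generic extension $V[G]$, as the definition of treeability demands. I would handle this by observing that internal projective $\PP$-absoluteness is itself preserved by $\PP$-forcing: any countable $M' \prec H_\theta^{V[G]}$ with $\PP$-generic filter $h \in V[G]$ over its collapse still reflects projective truth in $V[G]$, so the argument from $(\Leftarrow)$ applied to $S, T$ inside $V[G]$ shows $p[S]^{V[G]}$ and $p[T]^{V[G]}$ partition $\BS^{V[G]}$ according to $\varphi(\cdot,a)$ exactly as in $V$, completing the verification of treeability.
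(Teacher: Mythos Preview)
Your $(\Leftarrow)$ direction is correct and standard: branches through $\bar S$ lift along the inverse collapse to branches through $S$, and since $p[S],p[T]$ already partition $\BS$ in $V$, projective truth transfers between $\bar M[g]$ and $V$.

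The $(\Rightarrow)$ direction has a genuine gap. Your tree $S$ searches for \emph{arbitrary} countable well-founded models $\bar M$ equipped with a generic, but internal projective $\PP$-absoluteness says nothing about such models: it only speaks about transitive collapses of countable $M\prec H_\theta$. A branch through your $S$ therefore yields some $\bar M_0[g_0]\models\varphi(x,a)$ with no link to truth in $V$. The ``absorb into $M^*$'' step does not repair this. Putting $\bar M_0,g_0,\bar M_1,g_1$ inside a good $M^*\prec H_\theta$ and invoking internal absoluteness tells you that $\bar M^*[g^*]$ agrees with $V$ for any $g^*$ generic over $\bar M^*$; but the mere presence of a tiny model $\bar M_0[g_0]\in\bar M^*$ satisfying $\varphi(x,a)$ says nothing about whether $\bar M^*[g^*]$ (or $V$) satisfies it. So you cannot conclude $p[S]\cap p[T]=\emptyset$.

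The fix, which is what Steel's argument in \cite[Lemma~5.1]{St09} does (the present paper does not give its own proof, it defers to that reference), is to put the tree on $\omega\times H_\theta$: fix a well-ordering of $H_\theta$ and let a branch enumerate, via the associated Skolem functions, a genuine countable $M\prec H_\theta$ together with a $\bar\PP$-generic $g$ over its collapse $\bar M$ and the real $x$, subject to $\bar M[g]\models\varphi(x,a)$. Now every branch produces a model to which internal absoluteness applies directly, so $p[S]=\{x:\varphi(x,a)\}$ in $V$. For the passage to a $\PP$-generic $V[G]$ one argues that $p[S]\cap p[T]=\emptyset$ persists (the interleaved tree is well-founded, which is absolute) and that every real $x\in V[G]$ lies in some $\bar M[g]$ with $M\prec H_\theta^V$ countable in $V[G]$ (take $N\prec H_{\theta'}^{V[G]}$ countable with $x,G\in N$ and set $M=N\cap H_\theta^V$). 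Your alternative route via ``internal $\PP$-absoluteness is preserved by $\PP$'' is asserted without proof and is not obvious for arbitrary $\PP$; even if it held, it would not rescue the argument, since the tree you built still ranges over the wrong class of models.
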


Many standard proofs for forcing generic absoluteness from large cardinals, as for example in 
\cite{Wi}, \cite{MR1856755} or \cite{NeemanNorwood2019} also show that internal absoluteness is consistent from large cardinals. The reason is that they prove that the following property holds which, as we argue below, in turn implies internal absoluteness.

\begin{definition} 
Let $W$ be an inner model and $\kappa$ a cardinal. The \emph{${<}\kappa$-generics property over $W$ in $V$} of a forcing $\PP$ is the following statement: 
For any $\PP$-generic extension $V[G]$ of $V$ and any real $x\in V[G]$, there exist  
\begin{enumerate-(1)} 
\item
a forcing $\QQ\in W$ with $|\QQ|^W<\kappa$ and 
\item 
a $\QQ$-generic filter $H\in V[G]$ over $W$ 
\end{enumerate-(1)} 
such that $x\in W[H]$. 
\end{definition} 

When $\kappa$ is clear from the context, we will also refer to the ${<}\kappa$-generics property as the \emph{small generics property}. 

\begin{lemma}\label{lem:small generics property}
Suppose that $\kappa$ is inaccessible and $G$ is $\Col(\omega,{<}\kappa)$-generic over $V$. 
If $\PP$ has the ${<}\kappa$-generics property over $V$ in $V[G]$, then every projective formula $\varphi(v_0,v_1)$ is treeable in $V[G]$ with respect to $\PP$.
\end{lemma}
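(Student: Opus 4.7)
The plan is to follow Steel's approach in \cite[Lemma 5.1]{St09} and construct explicit trees $S, T \in V[G]$ for each projective formula $\varphi(v_0, v_1)$ and parameter $a \in V[G]$, whose projections compute $\{x : \varphi(x, a)\}$ and its complement in any $\PP$-generic extension of $V[G]$. In $V[G]$ we have $\kappa = \omega_1^{V[G]}$, and since $\Col(\omega,{<}\kappa)$ is $\kappa$-c.c., the parameter $a$ lies in $V[G \upharpoonright \delta_a]$ for some $\delta_a < \kappa$. Every forcing $\QQ \in V$ with $|\QQ|^V < \kappa$ is countable in $V[G]$, so its $\QQ$-names and $V$-generic filters can all be coded by reals of $V[G]$.

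In $V[G]$ I would define $S$ to be the tree on $\omega \times \omega$ whose branches are pairs $(x, c)$, where $c$ codes a triple $(\QQ, \dot x, H')$ with $\QQ \in V$, $|\QQ|^V < \kappa$, $\dot x \in V$ a $\QQ$-name for a real, $H'$ a $\QQ$-generic filter over $V$ with $\dot x[H'] = x$ and $a \in V[H']$, and satisfying the certificate
\[
V[H'] \models \mathbf{1}_{\Col(\omega,{<}\kappa)^{V[H']}} \Vdash \varphi(\check x, \check a).
\]
Define $T$ analogously with $\neg\varphi$. Both trees lie in $V[G]$: the set of admissible codes is definable there from a fixed real coding a $\Col(\omega,\delta_a)^V$-name for $a$, using that the forcing relation for projective formulas over the countably-presented extension $V[H']$ is uniformly definable in $V[G]$.

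Verification in a $\PP$-generic extension $V[G][H_\PP]$ reduces to establishing, for every real $x$ and every small $V$-generic $H' \in V[G][H_\PP]$ with $x, a \in V[H']$, the biconditional
\[
V[G][H_\PP] \models \varphi(x, a) \iff V[H'] \models \mathbf{1}_{\Col(\omega,{<}\kappa)^{V[H']}} \Vdash \varphi(\check x, \check a). \qquad (\ast)
\]
Granting $(\ast)$, the inclusion $p[S] \supseteq \{x : \varphi(x,a)\}$ follows from the small generics property, which manufactures the required $H'$ for any real $x$ in $V[G][H_\PP]$; the reverse inclusion reads $H'$ off a branch. The argument for $T$ is symmetric.

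The main obstacle is proving $(\ast)$, which I would establish simultaneously for $\varphi$ and $\neg\varphi$ by induction on the complexity of $\varphi$. Atomic and Boolean cases are routine. For an existential quantifier $\exists y\,\psi(y,x,a)$, the forward direction uses the small generics property to absorb a witness $y \in V[G][H_\PP]$ into a small extension $V[\tilde H]$ containing $\{x, y, a\}$, applies the induction hypothesis for $\psi$ in $V[\tilde H]$, and transfers the resulting forced statement down to $V[H']$ via homogeneity of the Levy collapse. The backward direction uses Solovay--Levy absorption of small generics into the full $\Col(\omega,{<}\kappa)$-generic---available by the inaccessibility of $\kappa$---to produce a $\Col(\omega,{<}\kappa)^{V[H']}$-generic over $V[H']$ inside $V[G] \subseteq V[G][H_\PP]$, yielding a witness to which the induction hypothesis applies. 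The hardest technical point is the compatibility of the $\PP$-generic filter with the small-generic/Levy structure; this is exactly what the small generics property guarantees, and is where the hypothesis is used in an essential way.
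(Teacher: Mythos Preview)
Your tree construction is essentially the one in the paper: branches code a small forcing $\QQ\in V$, a $\QQ$-name for the real, a $\QQ$-generic $H'$ over $V$, together with the certificate that $1_{\Col(\omega,{<}\kappa)}$ forces $\varphi$ over $V[H']$. The paper also phrases this as a union $S=\bigcup_{\QQ,\sigma} S_{\QQ,\sigma}$ of search trees (so the tree naturally lives on $\omega\times\kappa$ rather than $\omega\times\omega$, though this is cosmetic). You are also more careful than the paper about parameters $a\in V[G]$; the paper simply takes $a\in V$.

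The genuine difference is in the verification of $(\ast)$. You propose an induction on the complexity of $\varphi$, shuttling witnesses through the small generics property and Solovay--L\'evy absorption at each existential quantifier. The paper instead proves $(\ast)$ in one stroke, with no induction, via the following standard fact (its Claim~1): for any small $\QQ\in V$ and any $\QQ$-generic $G_0\in V[G][H_\PP]$ over $V$, there is a $\Col(\omega,{<}\kappa)$-generic $H_0$ over $V[G_0]$ with $\RR^{V[G_0][H_0]}=\RR^{V[G][H_\PP]}$. Since a projective statement with real parameters depends only on the reals of the model, this immediately gives both directions of $(\ast)$: the certificate forces $\varphi$ in $V[G_0][H_0]$, which has the same reals as $V[G][H_\PP]$; conversely, if $\varphi$ holds in $V[G][H_\PP]$ it holds in $V[G_0][H_0]$, and homogeneity of the L\'evy collapse upgrades this to the certificate.

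Your inductive argument is not wrong---the ingredients you invoke (absorption, homogeneity, same-reals transfers) are exactly what make the direct argument work---but it reproves the same-reals fact piecemeal at every quantifier. Isolating Claim~1 once, as the paper does, eliminates the induction and makes the verification a two-line argument.
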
 
\begin{proof} 

Suppose that $\kappa$ is inaccessible and $G$ is $\Col(\omega,{<}\kappa)$-generic over $V$. We are going to use the following standard claim about $\Col(\omega,{<}\kappa)$-generic extensions of $V$.

\begin{claim}\label{cl:standard fact Levy collapse inaccessible}
Suppose that $\PP$ is a forcing in $V$ of size ${<}\kappa$ in $V$, and $H$ is $\PP$-generic over $V[G]$. 
Moreover, suppose that $\QQ$ is a forcing in $V$ of size ${<}\kappa$ in $V$ and $G_0$ is $\QQ$-generic over $V$. 
Then there is a $\Col(\omega,{<}\kappa)$-generic filter $H_0$ over $V[G_0]$ with $\RR^{V[G_0][H_0]}=\RR^{V[G][H]}$. 
\end{claim} 

Now suppose $\PP$ has the ${<}\kappa$-generics property over $V$ and let $\varphi(v_0,v_1)$ be a projective formula. Let $a \in V$ be a parameter. We will construct a tree $S$ such that \[ p[S] = \{ x \in \BS \mid \varphi(x,a)\} \] in every $\PP$-generic extension of $V[G]$. The tree $T$ for $\neg\varphi(x,a)$ can then be defined analogously. We obtain the tree $S$ as a union of trees $S_{\QQ,\sigma}$ for all forcings $\QQ\in V$ of size ${<}\kappa$ in $V$ and all $\QQ$-names $\sigma$ for reals. Here the tree $S_{\QQ,\sigma}$ is the canonical tree searching for a pair $(G_0,x)$, such that
\begin{enumerate-(1)}
    \item $x$ is a real,
    \item $G_0$ is a $\QQ$-generic filter over $V$,
    \item $\sigma^{G_0}=x$, and
    \item In  $V[G_0]$,  $1_{\Col(\omega,{<}\kappa)} \Vdash \varphi(\sigma^{\check{G}_0},\check{a})$. 
\end{enumerate-(1)}
See, for example, the proof of \cite[Lemma 5.1]{St09} for a formal definition of a similar search tree.

\begin{claim} 
$p[S] = \{ x \in \BS \mid \varphi(x,a)\}$ holds in all $\PP$-generic extensions of $V[G]$.
\end{claim} 
\begin{proof} 
Let $H$ be $\PP$-generic over $V[G]$. Suppose that $(G_0,x)$ is a branch through the tree $S$, say it is a branch through $S_{\QQ,\sigma}$. Then there is some $H_0$ as in Claim \ref{cl:standard fact Levy collapse inaccessible}. 
Since $(G_0,x)$ is a branch through $S_{\QQ,\sigma}$, we have $V[G_0][H_0]\models \varphi(x,a)$. 
As $\RR^{V[G_0][H_0]}=\RR^{V[G][H]}$, this implies $V[G][H]\models \varphi(x,a)$. 


Conversely, suppose that $V[G][H]\models \varphi(x,a)$. Using the ${<}\kappa$-generics property for $\PP$ over $V$ in $V[G]$, we find some $\QQ\in V$ of size ${<}\kappa$ in $V$ and some $\QQ$-generic filter $G_0\in V[G][H]$ over $V$ with $x\in V[G_0]$. Let $\sigma$ be a $\QQ$-name for a real such that $\sigma^{G_0} = x$. Then $(G_0,x)$ is a branch through $S_{\QQ,\sigma}$. 
\end{proof} 

We can now finish the proof of Lemma \ref{lem:small generics property} by analogously defining a tree $T$ such that \[ p[T] = \{ x \in \BS \mid \neg\varphi(x,a)\} \] holds in all $\PP$-generic extensions of $V[G]$.
\end{proof}

\section{Proof of the main theorem} 

This section is devoted to the proof of Theorem \ref{characterization of internal absoluteness via 1-step absoluteness and regularity}. 
We first introduce some terminology. 
Fix a $\sigma$-ideal $I$ on the Borel subsets of an uncountable Polish space $X$. 
We can assume that $X$ equals the Cantor space $2^\omega$, since all uncountable Polish spaces are Borel isomorphic \cite[Theorem 15.6]{kechris2012classical}. 
Elements of $2^\omega$ can be identified with sets of natural numbers and are called \emph{reals}.
A generic filter $G$ over $V$ for the forcing $\PP_I$ adds a real $x\in X$ such that for every Borel set $B\subseteq X$ coded in the ground model, $B\in G$ if and only if $x\in B$ by \cite[Proposition 2.1.2]{MR2391923}, and in particular $V[x]=V[G]$. 
We therefore call $x$ a \emph{$\PP_I$-generic real}. 

Suppose that $M$ is a $\omega$-model\footnote{We only consider wellfounded models.} of $\ZFC^-$ with $\PP_I\in M$ and 
$$\tau = \{ (\check{n},p) \mid p\in A_n)\}\in H_{\omega_1}^M$$ 
is a nice $\PP_I$-name for a real, where each $A_n$ is a countable antichain in $\PP_I$. 
If $g$ is a $\PP_I$-generic filter over $M$ that induces the $\PP_I$-generic real $x$, we write $\tau^x=\tau^g$. 
One can compute $\tau^x$ as follows. 
Define 
$$\tau^{(y)}= \{ n\in \omega \mid \exists p\ (\check{n},p)\in \tau \wedge    y\in p \}$$ 
for arbitrary reals $y$. 
By the definition of $x$ from $g$, $\tau^x=\tau^{(x)}$. 
Moreover, the map sending any real $y$ to $\tau^{(y)}$ is Borel measurable, since $n\in \tau^{(y)} \Longleftrightarrow y \in \bigcup A_n$ for all $n\in\omega$.

If $M$ is a countable $\omega$-model of $\ZFC^-$ and $\PP_I\in M$, we will write $A_{\PP_I,M}$ for the set of $\PP_I$-generic reals over $M$. 
Note that 
$$A_{\PP_I,M}=\bigcap \{\bigcup D\cap M\mid M \models ``D  \text{ is a dense subset of $\PP_I$''}\}$$ is Borel. 
From now on, assume that $\PP_I$ is proper.
 Following standard terminology, 
 if $\theta>2^{|\PP_I|}$ is regular, $M\prec H_\theta$ and $\PP_I\in M$, 
then a condition 
$B\in \PP_I$ is called \emph{$M$-generic} if for every maximal antichain $A\in M$ in $\PP_I$, $A\cap M$ is predense below $B$ in $\PP_I$. 
 $M$-generic conditions below $B\in \PP_I\cap M$ exist if and only if $A_{\PP_I,M}\cap B\notin I$ by \cite[Proposition 2.2.2]{MR2391923} and in this case $A_{\PP_I,M}\cap B$ is $M$-generic.  


From now on, we will assume that $\PP_I$ is a forcing on the reals by working with Borel codes instead of Borel sets. 
Thus $\PP_I$ is the set of Borel codes for Borel sets $A\notin I$. 
However, we still use the notation ``$x\in A$'' when $x$ is a real and $A\in \PP_I$ (thus $A$ is a Borel code) to mean that $x$ is an element of the set coded by $A$. 
Note that in this set-up, conditions $A\in \PP_I$ and nice $\PP_I$-names in $H_{\omega_1}$ will not be moved by transitive collapses. 

We shall use the following standard fact about proper forcings on the reals: 
for any $\PP_I$-name $\sigma$ for a real and any $p\in \PP_I$, there is a nice $\PP_I$-name $\tau\in H_{\omega_1}$ and some $q\leq p$ with $q\Vdash \sigma=\tau$ (see e.g. \cite[Proposition 2.11]{castiblanco2021preserving}). 




We now proceed with the proofs. 
The following level-by-level equivalence strengthens the equivalence of \ref{characterization of internal absoluteness via 1-step absoluteness and regularity 1} and \ref{characterization of internal absoluteness via 1-step absoluteness and regularity 2} in Theorem \ref{characterization of internal absoluteness via 1-step absoluteness and regularity}. 

\begin{lemma} 
\label{level-by-level equivalence of internal absoluteness and uniformization}
Let $n\geq1$. 
The following statements are equivalent for all proper forcings $\PP_I$: 
\begin{enumerate-(1)} 
\item 
\label{level-by-level equivalence of internal absoluteness and uniformization a}
Internal $\Sigma^1_n$ $\PP_I$-absoluteness. 

\item 
\label{level-by-level equivalence of internal absoluteness and uniformization b}
${\bf\Sigma}^1_n$ uniformization up to $I$. 

\item 
\label{level-by-level equivalence of internal absoluteness and uniformization c}
${\bf\Pi}^1_{n-1}$ uniformization up to $I$. 
\end{enumerate-(1)} 
\end{lemma}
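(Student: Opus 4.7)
The plan is to prove (2) $\Leftrightarrow$ (3) by a standard syntactic reduction and to establish (1) $\Leftrightarrow$ (2) directly using countable elementary submodels. The direction (2) $\Rightarrow$ (3) is immediate from $\bs\Pi^1_{n-1} \subseteq \bs\Sigma^1_n$. For (3) $\Rightarrow$ (2), given a $\bs\Sigma^1_n$ relation $R$ written as $R(u,y) \equiv \exists z\, T(u,y,z)$ with $T \in \bs\Pi^1_{n-1}$, I would pair $(y,z)$ into a single real $w$ and apply $\bs\Pi^1_{n-1}$ uniformization up to $I$ to the $\bs\Pi^1_{n-1}$ relation $R'(u,w) \equiv T(u,(w)_0,(w)_1)$; since $p[R'] = p[R]$, extracting the first coordinate of the resulting Borel uniformization yields a Borel uniformization of $R$ on the same Borel set.

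For (1) $\Rightarrow$ (2), fix $R \in \bs\Sigma^1_n$ and $A \in \PP_I$. Choose a countable $M \prec H_\theta$ in the club from Definition~\ref{def:IA} with $R, A, \PP_I \in M$, and let $\bar M$ be its transitive collapse. By properness, $A \cap A_{\PP_I, \bar M}$ is an $I$-positive Borel set. Working inside $\bar M$, the forcing theorem produces a condition $B \leq A$ in $\PP_I^{\bar M}$ that decides the sentence $\exists y\, R(\dot x_g, y)$. In the affirmative case, the maximum principle in $\bar M$ supplies a nice $\PP_I$-name $\tau \in \bar M$ with $B \Vdash_{\bar M} R(\dot x_g, \tau)$; then for every $\bar M$-generic real $v \in B \cap A_{\PP_I, \bar M}$, we have $\bar M[v] \models R(v, \tau^{(v)})$, and internal $\bs\Sigma^1_n$ $\PP_I$-absoluteness transfers this to $V$. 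Hence the Borel function $v \mapsto \tau^{(v)}$ uniformizes $R$ on the $I$-positive Borel set $B \cap A_{\PP_I, \bar M} \subseteq A$. In the negative case, the same transfer applied to the $\bs\Pi^1_n$ statement $\forall y\, \neg R(v, y)$ shows $B \cap A_{\PP_I, \bar M} \subseteq A \setminus p[R]$.

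The main obstacle is (2) $\Rightarrow$ (1), which I would prove by induction on $n$. Since (2) at level $n$ implies (2) at all lower levels, the inductive hypothesis supplies internal $\bs\Sigma^1_{n-1}$ $\PP_I$-absoluteness. Writing $\varphi \equiv \exists y\, \psi$ with $\psi \in \bs\Pi^1_{n-1}$ and $x = \sigma^g$ for $\sigma \in \bar M$, the downward direction $\bar M[g] \models \varphi(x) \Rightarrow V \models \varphi(x)$ follows by taking any witness in $\bar M[g]$ and applying the inductive hypothesis. For the upward direction, given $V \models \exists y\, \psi(x, y)$, I would consider the $\bs\Pi^1_{n-1}$ relation $R(v, y) \equiv \psi(\sigma^{(v)}, y)$ and combine $\bs\Sigma^1_n$ uniformization in $V$ with the elementarity $M \prec H_\theta$ to show that the set of conditions $B^* \in \PP_I^{\bar M}$ admitting a Borel uniformization $f \in \bar M$ of $R$ is dense below any condition in $g$. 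Genericity then supplies such a $B^* \in g$, and the inductive $\bs\Pi^1_{n-1}$ absoluteness applied to $\bar M[g]$ and $V$ promotes $f(x) \in \bar M[g]$ to the required witness.

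The hardest technical point will be managing the mismatch between $\bar M$'s and $V$'s views of $p[R]$, since absoluteness at level $\bs\Sigma^1_n$ is precisely what we are trying to establish: a condition that $\bar M$ declares disjoint from $p[R]^{\bar M}$ need not be disjoint from $p[R]^V$. I expect to handle this by exploiting that uniformization in $V$ implies $I$-measurability of $\bs\Sigma^1_n$ sets, which together with elementarity of $M$ transfers density properties into $\PP_I^{\bar M}$ in such a way that the disjointness alternative is incompatible with the hypothesis $V \models \exists y\, \psi(x, y)$. The careful coordination between uniformization in $V$, the inductive $\bs\Pi^1_{n-1}$ absoluteness between $\bar M[g]$ and $V$, and the forcing relation inside $\bar M$ is the technical core of the argument.
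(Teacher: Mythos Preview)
Your proposal is correct and follows essentially the same route as the paper: the implication $(1)\Rightarrow(2)$ is argued exactly as in the paper, and your inductive argument for $(2)\Rightarrow(1)$ via the relation $R(v,y)\equiv\psi(\sigma^{(v)},y)$ is the paper's argument for $(3)\Rightarrow(1)$ (the paper closes the cycle as $(1)\Rightarrow(2)\Rightarrow(3)\Rightarrow(1)$, while you do $(1)\Leftrightarrow(2)$ and $(2)\Leftrightarrow(3)$ via the pairing trick --- a cosmetic difference).

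The one place where you make life harder than necessary is the final paragraph. The ``mismatch between $\bar M$'s and $V$'s views of $p[R]$'' that you worry about does not arise, and you do not need a separate $I$-measurability argument to rule out the disjointness alternative. The point is that $M\prec H_\theta$ is \emph{fully} elementary, not merely $\Sigma^1_n$-elementary: statements like ``$A\cap p[R]=\emptyset$'', ``$A\subseteq p[R]$'', and ``the Borel function coded by $c$ uniformizes $R$ on $A$'' are first-order over $H_\theta$ with parameters (Borel codes, $\sigma$) in $M$, so they transfer between $M$ and $V$, and hence between $\bar M$ and $V$ since Borel codes are fixed by the collapse. Thus the dense set you obtain in $\bar\PP$ by elementarity really consists of conditions $A$ that \emph{in $V$} are either disjoint from $p[R]$ or carry a Borel uniformization coded in $\bar M$; once $x\in A$ and $V\models\exists y\,\psi(x,y)$, the disjoint case is immediately excluded. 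This is exactly how the paper handles it, in one line.
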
 

\begin{proof} 
Write $\PP=\PP_I$. 

 \ref{level-by-level equivalence of internal absoluteness and uniformization a} $\Rightarrow$ \ref{level-by-level equivalence of internal absoluteness and uniformization b}: 
Suppose that $R=\{(x,y)\mid \varphi(x,y)\}$ is a relation on $2^\omega$, where $\varphi(x,y)$ is a $\Sigma^1_n$-formula with an additional parameter that we take to be interpreted by a fixed real $x_0$, but it is suppressed to simplify the notation. 
In more detail, one could work with a $\Sigma^1_n$-formula $\chi(x,y,z)$ with  $R=\{(x,y)\mid \chi(x,y,x_0)\}$. 
As we mentioned above, we shall identify elements of $2^\omega$ with sets of natural numbers. 
Fix some $A\in \PP$ and take a countable $M\prec H_\theta$ as in \ref{level-by-level equivalence of internal absoluteness and uniformization a} that witnesses properness of $\PP$ with $A, x_0\in M$. 
Let $\bar{M}$ denote its transitive collapse and let $\bar{\PP}$ denote the image of $\PP$.  
In $\bar{M}$, let $\sigma$ be a name for the $\bar{\PP}$-generic real. 
We have two cases: 

\begin{case} 
There exists some $B\leq A$ with $B\Vdash \nexists y\ \varphi(\sigma,y)$. 
In this case, $C:=A_{\bar{\PP},\bar{M}}\cap B \subseteq \{x \mid \nexists y\ \varphi(x,y) \}$, so $C$ is disjoint from $p[R]$. 
Since $C\notin I$ by properness, $C$ is as required. 
\end{case} 

\begin{case} 
There exists some $B\leq A$ with $B\Vdash \exists y\ \varphi(\sigma,y)$. 
By fullness,\footnote{By fullness, we mean that if an existential statement is forced, then this is witnessed by a name.} there is a nice name $\tau\in \bar{M}$ with 
$B\Vdash \varphi(\sigma,\tau)$. 
We can assume that $\tau\in H_{\omega_1}$ by strengthening $B$ using the standard fact above (since $\PP$ is assumed to be a forcing on the reals). 
We further have $C:=A_{\bar{\PP},\bar{M}}\cap B\notin I$, since $\PP$ is proper. 
Now for any $x\in C$, $\bar{M}[x] \models \varphi(x,\tau^{(x)})$ and hence $H_\theta\models \varphi(x,\tau^{(x)})$ and $V \models \varphi(x,\tau^{(x)})$ by \ref{level-by-level equivalence of internal absoluteness and uniformization a}. 
Therefore the graph of the function $f\colon C\rightarrow 2^\omega$ defined by $f(x)=\tau^{(x)}$ is a subset of $R$. 
As noted above, $f$ is Borel measurable as required. 
\end{case} 

The implication \ref{level-by-level equivalence of internal absoluteness and uniformization b} $\Rightarrow$ \ref{level-by-level equivalence of internal absoluteness and uniformization c} is clear.

\ref{level-by-level equivalence of internal absoluteness and uniformization c} $\Rightarrow$ \ref{level-by-level equivalence of internal absoluteness and uniformization a}: 
Fix a large regular $\theta$ and a countable $M\prec H_\theta$ with $\PP\in M$.
Let $\bar{M}$ denote the transitive collapse of $M$ and let $\bar{\PP}$ be the image of $\PP$ under the collapse. 
Let $\tau\in H_{\omega_1}^{\bar{M}}$ be a nice name for a real. 
For \ref{level-by-level equivalence of internal absoluteness and uniformization a}, it suffices to show the implication 

$$ V\models \psi(\tau^g) \Longrightarrow \bar{M}[g] \models \psi(\tau^g)$$ 
for all $1\leq i\leq n$,  $\bar{\PP}$-generic filters $g\in V$ over $\bar{M}$ and formulas $\psi(x)=\exists y\ \varphi(x,y)$, where $\varphi$ is $\Pi^1_{i-1}$. 
We show this implication by induction on $i$. 
Note that the converse  follows from the inductive hypothesis for $\neg \varphi$. 
The case $i=1$ follows from $\Sigma^1_1$ absoluteness. 
Assume $i\geq 2$. 

Let $R:=\{ (x,y) \mid \varphi(\tau^{(x)},y)\}$. 
For a formula $\theta(x)$ and a nice name $\sigma$ for a real, write  
$$S_{\theta,\sigma} =\{x\in 2^\omega \mid \theta(\sigma^{(x)}) \}.$$ 
There are by \ref{level-by-level equivalence of internal absoluteness and uniformization c} densely many $A\in \bar{\PP}$ with the properties: 
\begin{enumerate-(a)} 
\item 
\label{uniformising sets 1} 
$A\subseteq S_{\psi,\tau}$ or $A\subseteq 2^\omega\setminus S_{\psi,\tau}$, and 
\item 
\label{uniformising sets 2} 
there is a Borel measurable function $f\colon A\rightarrow X$ whose graph is a subset of $R$.  
\end{enumerate-(a)} 
Now suppose that $x$ is $\bar{\PP}$-generic over $\bar{M}$ and $V \models \psi(\tau^x)$. 
Pick some $A$ with \ref{uniformising sets 1} and \ref{uniformising sets 2} such that $x\in A$. 
Since  $V \models\psi(\tau^{(x)})$, $x\in S_{\psi,\tau}$ and hence $A\subseteq S_{\psi,\tau}$ by  \ref{uniformising sets 1}. 
Pick a Borel measurable function $f\colon A \rightarrow X$ that uniformizes $R$ by \ref{uniformising sets 2}. 
Since $x\in A$, we have $V\models \varphi(\tau^x,f(x))$. 
By the inductive hypothesis, $\bar{M}[x] \models \varphi(\tau^x,f(x))$. 
Hence $\bar{M}[x] \models \psi(\tau^x)$ as required. 
\end{proof}

Now we are ready to prove Theorem \ref{characterization of internal absoluteness via 1-step absoluteness and regularity}. 
We will show the stronger level-by-level version: 

\begin{theorem} 
\label{main level-by-level equivalence} 
Let $n\geq1$. 
The following statements are equivalent for all proper forcings $\PP_I$: 
\begin{enumerate-(1)}
\item 
\label{main level-by-level equivalence 1}
 Internal $\Sigma^1_n$ $\PP_I$-absoluteness. 
 
\item 
\label{main level-by-level equivalence 3}
${\bf \Sigma}^1_n$ uniformization up to sets in $I$.

\item 
\label{main level-by-level equivalence 2}
${\bf \Pi}^1_{n-1}$ uniformization up to sets in $I$.

\item 
\label{main level-by-level equivalence 4}
$1$-step $\Sigma^1_{n+1}$ $\PP_I$-absoluteness and $I$-measurability of all ${\bf\Sigma}^1_n$ sets. 
\end{enumerate-(1)} 
\end{theorem}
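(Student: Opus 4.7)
The plan is as follows. The equivalences \ref{main level-by-level equivalence 1}$\Leftrightarrow$\ref{main level-by-level equivalence 3}$\Leftrightarrow$\ref{main level-by-level equivalence 2} are already supplied by Lemma \ref{level-by-level equivalence of internal absoluteness and uniformization}, so it remains to fit \ref{main level-by-level equivalence 4} into this cycle. I would close the loop by proving \ref{main level-by-level equivalence 1}$\Rightarrow$\ref{main level-by-level equivalence 4} and \ref{main level-by-level equivalence 4}$\Rightarrow$\ref{main level-by-level equivalence 3}.

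For \ref{main level-by-level equivalence 1}$\Rightarrow$\ref{main level-by-level equivalence 4}, the $I$-measurability of $\bs\Sigma^1_n$ sets is the easy half: applying the $\bs\Sigma^1_n$-uniformization supplied by \ref{main level-by-level equivalence 3} to the trivial relation $A\times\{0\}$ for a $\bs\Sigma^1_n$ set $A$ yields a Borel $B\notin I$ with either $B\cap A=\emptyset$ or $B\subseteq A$. The substantial part is deriving $1$-step $\Sigma^1_{n+1}$-absoluteness from internal $\Sigma^1_n$-absoluteness. I would first establish the reflection direction from $V[G]$ to $V$: given $V[G]\models \exists y\,\varphi(y)$ with $\varphi\in\bs\Pi^1_n$, use fullness plus the fact that $\PP_I$ is a forcing on the reals to find $p\in G$ and a nice $\PP_I$-name $\tau\in H_{\omega_1}$ with $p\Vdash\varphi(\tau)$. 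A countable $M\prec H_\theta$ containing $p$, $\tau$, and the parameters, together with properness, then supplies a $\bar{\PP}_I$-generic $g\in V$ over $\bar{M}$ below $p$ with $\bar{M}[g]\models\varphi(\tau^g)$; internal $\bs\Pi^1_n$-absoluteness concludes $V\models\varphi(\tau^g)$. The reverse direction follows by contraposition: if $y_0\in V$ witnesses $V\models \exists y\,\varphi(y)$ but $V[G]\not\models \varphi(y_0)$, then $V[G]\models \neg\varphi(y_0)$ is $\bs\Sigma^1_n$ with parameter $y_0\in V$, and the same fullness-plus-internal-absoluteness argument now yields $V\models\neg\varphi(y_0)$, contradicting the choice of $y_0$.

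For \ref{main level-by-level equivalence 4}$\Rightarrow$\ref{main level-by-level equivalence 3}, it suffices to prove $\bs\Sigma^1_n$-uniformization up to $I$, since $\bs\Pi^1_{n-1}$-uniformization follows a fortiori. Given a $\bs\Sigma^1_n$ relation $R=\{(x,y)\mid\varphi(x,y)\}$ and Borel $A\notin I$, I would apply the $I$-measurability of the $\bs\Sigma^1_n$ set $p[R]$ to either produce a Borel $B\subseteq A$, $B\notin I$, disjoint from $p[R]$ (giving the first clause of Definition \ref{def:projective uniformization up to I}), or refine $A$ to a Borel $A'\subseteq p[R]$. In the latter case, $A'\Vdash \exists y\,\varphi(\sigma,y)$ with $\sigma$ the canonical name for the generic real, so by fullness there are $B\leq A'$ and a nice name $\tau\in H_{\omega_1}$ with $B\Vdash\varphi(\sigma,\tau)$. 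The function $f(x):=\tau^{(x)}$ is Borel measurable on $B$, and the set $C:=\{x\in B\mid \varphi(x,\tau^{(x)})\}$ is $\bs\Sigma^1_n$. By $I$-measurability of $C$, we refine $B$ to a Borel $B'\notin I$ with $B'\subseteq C$, in which case $f\restriction B'$ is the desired uniformization, or $B'\cap C=\emptyset$. In the latter case the statement $\forall x\in B'\,\neg\varphi(x,\tau^{(x)})$ is $\bs\Pi^1_n$ and holds in $V$; by $1$-step $\Pi^1_{n+1}$-absoluteness it transfers to every $\PP_I$-generic extension, and applied to a generic real below $B'\leq B$ this contradicts $B\Vdash\varphi(\sigma,\tau)$.

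The main obstacle is the upward direction of $1$-step $\Sigma^1_{n+1}$-absoluteness in \ref{main level-by-level equivalence 1}$\Rightarrow$\ref{main level-by-level equivalence 4}: internal absoluteness is by design a reflection statement from $\bar{M}[g]$ into $V$ rather than an upward step from $V$ to $V[G]$, so the trick is to trade the upward $\bs\Pi^1_n$ step for a downward $\bs\Sigma^1_n$ step, which is in turn handled by fullness and the nice-name reduction. A secondary delicate point is the bookkeeping in the nice-name arguments: one must ensure that the transitive collapse of the chosen countable $M$ fixes both the Borel codes in $\PP_I$ and the nice $\PP_I$-names in $H_{\omega_1}$, which is built into the setup where $\PP_I$ is realized as a forcing on the reals.
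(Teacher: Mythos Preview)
Your proof is correct, but your route back from \ref{main level-by-level equivalence 4} differs from the paper's. The paper closes the cycle by proving \ref{main level-by-level equivalence 4}$\Rightarrow$\ref{main level-by-level equivalence 1} directly: it works inside a countable $M\prec H_\theta$, isolates a dense set of \emph{decisive} conditions $A$ (conditions that force $\sigma=\tau$ for a nice $\tau\in H_{\omega_1}$ and are contained in either $S_{\varphi,\tau}$ or its complement), and then uses $1$-step $\Sigma^1_{n+1}$-absoluteness to show that each decisive $A$ actually forces the corresponding truth value of $\varphi(\tau)$. You instead prove \ref{main level-by-level equivalence 4}$\Rightarrow$\ref{main level-by-level equivalence 3} directly, bypassing the elementary-submodel machinery for this implication and producing the uniformizing function outright from a nice name. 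Your route is arguably more streamlined; the paper's route has the advantage of making the interplay between $I$-measurability (density of decisive conditions) and $1$-step absoluteness (decisive conditions force what they should) completely explicit and symmetric. For \ref{main level-by-level equivalence 1}$\Rightarrow$\ref{main level-by-level equivalence 4}, your direct argument for $V\prec_{\Sigma^1_{n+1}} V[G]$ via fullness and a single application of internal absoluteness is essentially the same as the paper's, which phrases it as $\bar M\prec_{\Sigma^1_{n+1}}\bar M[g]$.

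One step you state without justification deserves a sentence: from $A'\subseteq p[R]$ in $V$ you assert $A'\Vdash\exists y\,\varphi(\sigma,y)$. This is not automatic, since the generic real is not in $V$. It follows because the statement $\forall x\,(x\in A'\to\exists y\,\varphi(x,y))$ is $\Pi^1_{n+1}$ (with the Borel code for $A'$ as parameter) and therefore transfers to $V[G]$ by the $1$-step $\Sigma^1_{n+1}$-absoluteness you are assuming; the generic real lies in $A'$ and so satisfies the conclusion. This is exactly the same mechanism you invoke at the end of the argument to derive the contradiction, so the fix is just to say so.
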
 

\begin{proof}
Write $\PP=\PP_I$. 
The equivalence of \ref{main level-by-level equivalence 1}, \ref{main level-by-level equivalence 3} and \ref{main level-by-level equivalence 2} was shown in Lemma \ref{level-by-level equivalence of internal absoluteness and uniformization}. 

\ref{main level-by-level equivalence 1} $\Rightarrow$ \ref{main level-by-level equivalence 4}: 
Take $\theta$ and $M\prec H_\theta$ as in the definition of internal $\Sigma^1_n$ $\PP$-absoluteness. 
Let $\bar{M}$ be the transitive collapse of $M$ and let $\bar{\PP}$ be the image of $\PP$ under the collapse. 
For $1$-step $\Sigma^1_{n+1}$ $\PP$-absoluteness, 
it suffices to have $\bar{M}\prec_{\Sigma^1_{n+1}}\bar{M}[g]$ for any $\bar{\PP}$-generic filter $g\in V$ over $\bar{M}$. 
To see that this suffices, note that if $p$ forces a $\Sigma^1_{n+1}$-statement over $V$, it also forces this over models $\bar M$ as above. 
Let $g$ be $\bar \PP$-generic over $\bar M$ with $p\in g$. 
The statement then holds in $\bar M$ and thus in $V$. 
Returning to the proof of  $\bar{M}\prec_{\Sigma^1_{n+1}}\bar{M}[g]$, 
take any $\Pi^1_n$-formula $\varphi(x,y)$ with additional parameters in $\bar M$, let $\psi(x)$ denote the formula $\exists y\ \varphi(x,y)$ and suppose that $\bar M[g] \models \psi(x)$, where $x\in \bar M$ is a real. 
Then $V\models \psi(x)$ holds by internal $\Sigma^1_n$ $\PP$-absoluteness applied to $M$, $\varphi$ and a witness $y$ for $\varphi$ in $\bar M[g]$. 
Since $M\prec H_\theta$, we have $M\models \psi(x)$ as required.

It remains to show that any ${\bf\Sigma}^1_n$ set $A$ is $I$-measurable. 
Recall that \ref{main level-by-level equivalence 1} $\Rightarrow$ \ref{main level-by-level equivalence 3} by Lemma \ref{level-by-level equivalence of internal absoluteness and uniformization}. 
By \ref{main level-by-level equivalence 3} applied to the relation $A \times\{0\}$, we have two cases. 
In the first case, there exists an $I$-positive set $B$ disjoint from $A$. In the second case, there exists an $I$-positive set $B$ and a Borel measurable subfunction $f\colon B\rightarrow 2^\omega$ of $A\times \{0\}$. 
Then $B\subseteq A$.

\ref{main level-by-level equivalence 4} $\Rightarrow$ \ref{main level-by-level equivalence 1}: 
Take some $M\prec H_\theta$ witnessing properness and let $\bar{M}$ denote the transitive collapse of $M$. 
Let $\bar{\PP}$ denote the image of $\PP$ under the transitive collapse. 
Let $\sigma\in M$ be a nice $\PP$-name for a real and $\bar{\sigma}$ its image under the collapse.  
Take any $\Sigma^1_n$ formula $\varphi(x)$ with additional parameters in $M$. 
Suppose that $x$ is a $\bar{\PP}$-generic real over $\bar{M}$. 
For \ref{main level-by-level equivalence 1}, it suffices to show: 
$$\bar{M}[x]\models \varphi(\bar{\sigma}^x) \Longleftrightarrow V\models \varphi(\bar{\sigma}^x).$$

We first prove two general claims. 
Suppose that $\tau\in H_{\omega_1}$ is a nice name for a real. 
Recall that $S_{\varphi,\tau} =\{x\in 2^\omega \mid \varphi(\tau^{(x)}) \}$ from the proof of Lemma \ref{level-by-level equivalence of internal absoluteness and uniformization}.
Call a set $A\in\PP$ \emph{$\tau$-decisive} if 
$\tau\in H_{\omega_1}$ is a nice name for a real, $A\Vdash \sigma=\tau$ and either $A\subseteq S_{\varphi,\tau}$ or $A\subseteq 2^\omega\setminus S_{\varphi,\tau}$. Call $A$ \emph{decisive} if it is $\tau$-decisive for some $\tau$. 

\setcounter{cl}{0}
\begin{claim} 
\label{set of decisive conditions is dense} 
The set of decisive $A\in \PP$ is dense. 
\end{claim} 

\begin{proof} 
Since $\PP$ is a proper forcing on the reals, the set of $A\in\PP$ such that $A\Vdash \sigma=\tau$ holds for some nice name $\tau\in H_{\omega_1}$ is dense. 
For any such $A$, the set $S_{\varphi,\tau}=\{x\in \omega^\omega\mid \varphi(\tau^{(x)})\}$ is ${\bf \Sigma}^1_n$ and hence $I$-measurable by \ref{main level-by-level equivalence 4}. 
Therefore, there is some $B\leq A$ with either $B\subseteq S_{\varphi,\tau}$ or $B\subseteq\omega^\omega\setminus S_{\varphi,\tau}$ as required. 
\end{proof}

\begin{claim} 
\label{properties of decisive sets} 
If $A\in M$ is $\tau$-decisive for some $\tau\in M$, then: 
\begin{enumerate-(a)} 
\item 
\label{properties of decisive sets 1}
If $A\subseteq S_{\varphi,\tau}$, then $A\Vdash \varphi(\tau)$. 
\item 
\label{properties of decisive sets 2}
If $A\subseteq \omega^\omega \setminus S_{\varphi,\tau}$, then $A\Vdash \neg \varphi(\tau)$.
\end{enumerate-(a)} 
\end{claim} 

\begin{proof} 
Recall that the elements of $\PP$ are Borel codes. 
In particular, conditions $A\in \PP$ and nice $\PP$-names in $H_{\omega_1}$ are not moved in the transitive collapse of $M$. 

\ref{properties of decisive sets 1}: 
Towards a contradiction, suppose that $A\not\Vdash \varphi(\tau)$. 
Since $\bar{M}\cong M\prec H_\theta$, we have $A\not\Vdash^{\bar{M}} \varphi(\tau)$. 
Hence there is some $B\leq A$ in $\bar{M}$ with $B\Vdash^{\bar{M}}\neg\varphi(\tau)$.
Then $\exists x\in B\ (\neg\varphi(\tau^{(x)}))$ holds in any $\bar{\PP}$-generic extension of $\bar{M}$ by a filter containing $B$. 
By $1$-step $\Sigma^1_{n+1}$ $\PP$-absoluteness by \ref{main level-by-level equivalence 4}, $\bar{M} \models \exists x\in B\ (\neg\varphi(\tau^{(x)}))$. 
Since $\bar{M}\cong M\prec H_\theta$, we have $V \models \exists x\in B\ (\neg\varphi(\tau^{(x)}))$. 
But this contradicts $B\subseteq A \subseteq S_{\varphi,\tau}$ (here we identify $A$ and $B$ with the Borel sets which they code).

\ref{properties of decisive sets 2}: 
An argument analogous to \ref{properties of decisive sets 1} works. 
The roles of $\varphi$ and $\neg\varphi$ are switched and $1$-step $\Sigma^1_n$ $\PP$-absoluteness by \ref{main level-by-level equivalence 4} is used. 
\end{proof} 

We can now prove the theorem with the help of the previous claims. 
Recall that $x$ is $\bar{\PP}$-generic over $\bar{M}$. 
By Claim \ref{set of decisive conditions is dense} applied in $\bar{M}$, there exists some decisive $A\in M$ with $x\in A$. 
Since $\bar{M}\cong M\prec H_\theta$, there is some $\tau\in M$ such that $A$ is $\tau$-decisive. 
We first claim that $\bar{\sigma}^x=\tau^x=\tau^{(x)}$. 
To see this, note that $A\Vdash \sigma=\tau$, since $A$ is $\tau$-decisive. 
Since $\bar{M} \cong M \prec H_\theta$, we then have $A\Vdash^{\bar{M}} \bar{\sigma}=\tau$. 
Since $x\in A$, $A$ is in the filter induced by $x$. 
Hence $\bar{\sigma}^x=\tau^x$. 
We further have $\tau^x=\tau^{(x)}$, since $x$ is $\bar{\PP}$-generic over $\bar{M}$. 

It thus suffices to show: 
$$\bar{M}[x]\models \varphi(\tau^x) \Longleftrightarrow V\models \varphi(\tau^{(x)}).$$ 
To see this, consider the two cases in Claim \ref{properties of decisive sets}. 
First suppose that $A\subseteq S_{\varphi,\tau}$. 
We have $\bar{M}[x]\models \varphi(\tau^{x})$, since $A\Vdash \varphi(\tau)$ holds by Claim \ref{properties of decisive sets} and therefore $A\Vdash^{\bar{M}} \varphi(\tau)$ using $\bar{M}\cong M\prec H_\theta$. 
Moreover, $V\models \varphi(\tau^{(x)})$ since $x\in A\subseteq S_{\varphi,\tau}$. 
Finally, suppose that $A\subseteq\omega^\omega\setminus S_{\varphi,\tau}$. 
Similar to the previous case, we have $\bar{M}[x]\models \neg\varphi(\tau^{x})$ and $V\models \neg\varphi(\tau^{(x)})$. 
\end{proof}

\section{Consistency strength and examples} 

We round off this paper by some remarks of the consistency strength of the statements shown to be equivalent in Theorem \ref{characterization of internal absoluteness via 1-step absoluteness and regularity}. More precisely, we consider the strength of internal projective absoluteness, first for the class of all forcings and then for the specific examples of Cohen forcing and random forcing.

\subsection{The strength of internal projective absoluteness} 

  Woodin proved that 2-step projective absoluteness (for the class of all forcings) holds in generic extensions collapsing certain large cardinals. More precisely, he showed that if $\kappa_1 < \dots < \kappa_n$ are strong cardinals, then 2-step $\SIGMA^1_{n+3}$ generic absoluteness holds in $\Col(\omega, 2^{2^{\kappa_n}})$-generic extensions, see \cite[Corollary 4.7]{St09}. Wilson \cite{Wi} improved Woodin's result by showing that 2-step $\SIGMA^1_{n+3}$ generic absoluteness already holds in $\Col(\omega, 2^{\kappa_n})$-generic extensions of the universe. Woodin's argument shows in fact that internal projective absoluteness can be forced from infinitely many strong cardinals. More precisely, if
  $\lambda$ is a limit of strong cardinals, then internal projective absoluteness holds in $V[G]$
  where $G$ is a $\Col(\omega,\lambda)$-generic filter over $V$. In addition, local versions of this result hold analogous to \cite[Corollary 4.7]{St09} and \cite[Theorem 1.1]{Wi} for 2-step projective generic absoluteness.

  Recall that for a class $\CF$ of forcings given by a definition without parameters, $2$-step absoluteness states that for any forcing $\PP$ in $\CF$, any $\PP$-name $\dot{\QQ}$ for a forcing in $\CF$ and any $\PP*\dot{\QQ}$-generic filter $G*H$ over $V$, $V[G]$ and $V[G*H]$, 
  \[ V[G] \models \varphi(x) \Longleftrightarrow V[G*H] \models \varphi(x) \] 
  holds for every projective formula $\varphi(v_0)$ and every real $x \in V[G]$. 
  We further define  \emph{relative projective absoluteness} to hold for $\CF$ if for all generic extensions $V[G]$ and $V[H]$ for forcings in $\CF$ with $V[G]\subseteq V[H]$, $V[G]$ and $V[H]$,  \[ V[G] \models \varphi(x) \Longleftrightarrow V[H] \models \varphi(x) \] 
  holds for every projective formula $\varphi(v_0)$ and every real $x \in V[G]$.
  
  These two notions are clearly equivalent if $\CF$ is closed under $2$-step iterations and quotients, i.e., for any two generic extensions $V[G]$ and $V[H]$ of $V$ by forcings in $\CF$ with $V[G]\subseteq V[H]$, $V[H]$ is itself a generic extension of $V[G]$ by a forcing in $\CF$. 
  This is the case for Cohen forcing and random forcing (see \cite[Lemma 3.2.8]{BaJu95} and \cite{kanovei2018intermediate}). 
  
  It is easy to see that internal projective absoluteness implies relative projective absoluteness. 

\begin{lemma}\label{lem:1and2stepabs}
  For any class $\CF$ of forcing notions, internal projective $\CF$-absoluteness implies relative projective absoluteness for forcings in $\CF$. 
\end{lemma}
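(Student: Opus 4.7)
The plan is to apply internal $\CF$-absoluteness twice inside a countable transitive collapse of a suitable elementary submodel---once for $\bar \PP$ and once for $\bar \QQ$---to deduce an identity of $\bar\QQ$-Boolean values in the collapse, then transfer this identity up to $V$ by elementarity and evaluate at $H$.

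Fix $\PP,\QQ\in\CF$, a $\PP$-generic $G$ and $\QQ$-generic $H$ over $V$ with $V[G]\subseteq V[H]$, a real $x\in V[G]$, and a projective formula $\varphi$. Take a $\PP$-name $\dot x\in V$ for $x$ and a $\QQ$-name $\dot G\in V$ for the filter $G$ viewed inside $V[H]$ (which exists because $V[G]\subseteq V[H]$). Set $b:=\llbracket\varphi(\dot x)\rrbracket_{\PP}^V$ and $B:=\llbracket\varphi(\dot x^{\dot G})\rrbracket_{\QQ}^V$, so that by the forcing theorem $V[G]\models\varphi(x)\iff b\in G$ and $V[H]\models\varphi(x)\iff B\in H$. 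Choose a sufficiently large regular $\theta$ and a countable $M\prec H_\theta^V$ in the club witnessing internal $\CF$-absoluteness for both $\PP$ and $\QQ$, with $\PP,\QQ,\dot x,\dot G\in M$; let $\pi\colon M\to\bar M$ be the Mostowski collapse, with images $\bar\PP,\bar\QQ,\bar{\dot x},\bar{\dot G},\bar b,\bar B$.

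For any $\bar\QQ$-generic filter $\bar h\in V$ over $\bar M$, which exists by countability of $\bar M$, elementarity yields that $\bar g:=\bar{\dot G}^{\bar h}\in V$ is $\bar\PP$-generic over $\bar M$ and $\bar M[\bar g]\subseteq\bar M[\bar h]$. Writing $z:=\bar{\dot x}^{\bar g}$, two applications of internal $\CF$-absoluteness give
\[ \bar M[\bar g]\models\varphi(z)\iff V\models\varphi(z)\iff\bar M[\bar h]\models\varphi(z). \]
Unfolding this via the forcing theorem in $\bar M$ reads $\llbracket\bar b\in\bar{\dot G}\rrbracket_{\bar\QQ}\in\bar h\iff\bar B\in\bar h$. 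Since this holds for every $\bar\QQ$-generic $\bar h\in V$ over $\bar M$, and $V$ contains such generics meeting every nonzero $\bar\QQ$-condition of $\bar M$, the two $\bar\QQ$-Boolean values must coincide: $\llbracket\bar b\in\bar{\dot G}\rrbracket_{\bar\QQ}=\bar B$ inside $\bar M$. By elementarity of $\bar M\cong M\prec H_\theta^V$, the identity $\llbracket b\in\dot G\rrbracket_{\QQ}=B$ holds in $V$; evaluating at $H$ in $V[H]$ then yields $V[G]\models\varphi(x)\iff V[H]\models\varphi(x)$.

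\textbf{Main obstacle.} The most delicate step is passing from the pointwise equivalence of satisfaction across all $\bar\QQ$-generics $\bar h\in V$ to the Boolean identity $\llbracket\bar b\in\bar{\dot G}\rrbracket_{\bar\QQ}=\bar B$ in $\bar M$. This relies on $V$ containing enough $\bar\QQ$-generic filters over the countable $\bar M$ to separate any two distinct Boolean values---a straightforward density argument leveraging the countability of $\bar M$, but one must take care to verify that the Boolean values $b$ and $B$ are genuinely members of $M$ so that elementarity applies.
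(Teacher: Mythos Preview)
Your approach is essentially the same as the paper's---reflect to a countable $\bar M$ and apply internal absoluteness twice to bridge $\bar M[\bar g]$ and $\bar M[\bar h]$ through $V$---but you spell out the transfer back to $V$ explicitly via Boolean values, whereas the paper simply asserts ``it suffices to show that relative projective absoluteness holds for generic extensions of $\bar M$'' and leaves that reflection implicit.

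There is one genuine imprecision. You claim that for \emph{every} $\bar\QQ$-generic $\bar h\in V$ over $\bar M$, elementarity yields that $\bar g:=\bar{\dot G}^{\bar h}$ is $\bar\PP$-generic over $\bar M$. But the name $\dot G$ was only chosen so that $\dot G^H=G$; nothing guarantees that $1_\QQ$ forces ``$\dot G$ is $\PP$-generic over the ground model''. You need to fix a condition $q\in H$ that forces this (and whatever else is needed to evaluate $\dot x^{\dot G}$ as a real), put $q$ into $M$, and run your argument only for $\bar h$ containing $\bar q$. You then obtain the Boolean identity $\llbracket \bar b\in\bar{\dot G}\rrbracket_{\bar\QQ}=\bar B$ only below $\bar q$, hence by elementarity $\llbracket b\in\dot G\rrbracket_\QQ$ and $B$ agree below $q$ in $V$; since $q\in H$, evaluating at $H$ still gives the desired conclusion. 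This is a routine fix, not a conceptual gap.
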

\begin{proof} 
Take any countable $\bar M \cong M \prec H_\theta$ witnessing internal projective absoluteness. 
It suffices to show that relative projective absoluteness holds for generic extensions of $\bar M$ via forcing notions in the transitive collapse of $\CF \cap M$. 
To this end, suppose that $\PP,\QQ \in \bar M$ are such forcing notions, $g$ is $\PP$-generic over $\bar M$ and  $h$ is $\PP$-generic over $\bar M$ with $\bar M[g]\subseteq \bar M[h]$. 
Let $\varphi(v_0)$ be a projective formula and $x \in \bar M[g]$ some real. Then \[ \bar M[g] \models \varphi(x) \Longleftrightarrow H_\theta \models \varphi(x) \Longleftrightarrow \bar M[h] \models \varphi(x), \] by internal projective absoluteness applied to $\bar M[g]$ and $\bar M[h]$.
\end{proof} 

Lemma \ref{lem:1and2stepabs} together with a result by Hauser, see
\cite{Ha95}, yields the following fact.

\begin{fact}
  Internal projective absoluteness for the class of all forcings implies the existence of an inner model with infinitely many strong cardinals.
\end{fact}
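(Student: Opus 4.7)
The plan is to combine Lemma \ref{lem:1and2stepabs} with the inner model theoretic result of Hauser \cite{Ha95}. Assume that internal projective absoluteness holds for the class of all forcings.

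First, I would invoke Lemma \ref{lem:1and2stepabs} with $\CF$ equal to the class of all forcings. This immediately yields relative projective absoluteness: for any two generic extensions $V[G] \subseteq V[H]$ of $V$ and any projective formula $\varphi(v_0)$ with real parameter $x \in V[G]$, one has $V[G] \models \varphi(x) \Longleftrightarrow V[H] \models \varphi(x)$.

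Next, I would observe that the class of all forcings is closed under $2$-step iterations and quotients: any iteration $\PP * \dot\QQ$ is itself a forcing notion, and whenever $V[G] \subseteq V[H]$ are generic extensions of $V$, the model $V[H]$ is a generic extension of $V[G]$ by the quotient forcing computed in $V[G]$. Consequently, as noted in the discussion preceding Lemma \ref{lem:1and2stepabs}, relative projective absoluteness coincides with $2$-step projective absoluteness in this setting. Therefore the hypothesis gives $2$-step projective absoluteness for the class of all forcings.

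Finally, I would apply Hauser's theorem from \cite{Ha95}, which asserts that $2$-step projective absoluteness for all forcings already implies the existence of an inner model with infinitely many strong cardinals. Combining this with the previous step yields the conclusion. There is no real obstacle here beyond citing the correct theorem from \cite{Ha95}; the entire content of the argument is that internal absoluteness is strong enough to capture $2$-step absoluteness (by Lemma \ref{lem:1and2stepabs}), at which point Hauser's lower bound applies verbatim.
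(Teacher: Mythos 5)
Your proposal is correct and follows exactly the paper's route: the paper derives this fact by combining Lemma \ref{lem:1and2stepabs} with Hauser's lower bound from \cite{Ha95}, which is precisely what you do. The extra observation that relative and $2$-step projective absoluteness coincide for the class of all forcings (via quotient forcings) is a reasonable filling-in of the detail the paper leaves implicit.
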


Therefore, the consistency strength of internal projective absoluteness is exactly $\omega$ strong cardinals.


\subsection{Cohen forcing} 

In this section we look more closely at internal projective absoluteness for Cohen forcing. Is is not hard to see that internal projective $\CC$-absoluteness, where $\CC$ denotes Cohen forcing, does not follow from projective Cohen $2$-step absoluteness.

\begin{lemma}
\label{Cohen 2-step absoluteness does not imply internal Cohen absoluteness} 
Internal projective $\CC$-absoluteness does not follow from projective 2-step absoluteness for $\CC$.
\end{lemma}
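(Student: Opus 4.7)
The plan is to apply the implication \ref{main level-by-level equivalence 1} $\Rightarrow$ \ref{main level-by-level equivalence 4} of Theorem \ref{main level-by-level equivalence}: internal projective $\CC$-absoluteness implies that every projective set is $I$-measurable, where $I$ is the meager ideal associated to Cohen forcing. Since $I$-measurability here coincides with the Baire property, internal projective $\CC$-absoluteness implies that every projective set has the Baire property. It therefore suffices to exhibit a model in which projective $2$-step $\CC$-absoluteness holds but some projective set lacks the Baire property.

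To produce such a model, I would begin with a ground model $V_{0}$ satisfying projective $2$-step $\CC$-absoluteness (obtainable from a suitable large-cardinal hypothesis, or indeed from substantially less since Cohen forcing is very mild). Over $V_{0}$, the plan is to force in a projective definable wellorder of the reals by an almost-disjoint coding construction of the type developed by Friedman and collaborators. Since Cohen forcing composes with itself, projective $2$-step $\CC$-absoluteness coincides with projective relative $\CC$-absoluteness (as in the discussion preceding Lemma \ref{lem:1and2stepabs}), and this robust form of absoluteness can be arranged to be preserved by the coding forcing. In the resulting model, the new projective wellorder yields, via a standard Vitali-style argument, a projective set without the Baire property; hence by the first paragraph internal projective $\CC$-absoluteness fails, while projective $2$-step $\CC$-absoluteness continues to hold.

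The main obstacle is the careful engineering of the coding forcing so that projective $2$-step $\CC$-absoluteness is preserved in the extension. This is a delicate but well-understood point in the literature on definable wellorders compatible with forcing-absoluteness principles, and the remaining verification that the resulting model witnesses the non-implication is routine once the model is in hand.
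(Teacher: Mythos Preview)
Your reduction in the first paragraph is correct and matches the paper: internal projective $\CC$-absoluteness implies that all projective sets have the Baire property, so it suffices to find a model with projective $2$-step $\CC$-absoluteness in which some projective set lacks the Baire property.

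The gap is in your construction of such a model. You propose to start from a ground model with $2$-step $\CC$-absoluteness and then force a projective wellorder of the reals via almost-disjoint coding, asserting that the coding can be engineered to preserve $2$-step $\CC$-absoluteness. You call this ``delicate but well-understood in the literature'', but you give no reference and no argument, and this is precisely the nontrivial step. Coding forcings that introduce projective wellorders are notoriously destructive of absoluteness and regularity principles; showing that one of them preserves projective $2$-step $\CC$-absoluteness is not routine and is not, to my knowledge, a standard result one can simply cite. As written, this is a plan rather than a proof.

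The paper avoids this entirely by exhibiting a concrete, elementary model: any $\Add(\omega,\omega_1)$-generic extension of $L$. There, the set $\RR^L$ of constructible reals is $\Sigma^1_2$, hence projective; by a result of Kunen it is not meager in the extension; and an easy claim (if an inner model's reals form a proper subset with the Baire property, then that set is meager) shows $\RR^L$ cannot have the Baire property. Projective $2$-step $\CC$-absoluteness holds in such extensions by a straightforward homogeneity argument, since adding one or two further Cohen reals to an $\Add(\omega,\omega_1)$-extension of $L$ yields another $\Add(\omega,\omega_1)$-extension of $L$. No coding, no large cardinals, and no preservation theorem are needed.
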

\begin{proof}
We will use the following well-known claim. We include a proof for the reader's convenience. 

\setcounter{cl}{0}
\begin{claim} \label{cl:projective non-full set is meager} 
Suppose that $M$ is an inner model of $V$ and $R={}^\omega 2\cap M\neq 2^\omega$. If $R$ has the property of Baire, then it is meager. 
\end{claim} 
\begin{proof}
First fix some notation. 
Write $(x+y)(i)=x(i)+y(i)$ mod $2$ for $x,y\in {}^\omega 2$ and $x+Y=\{x+y\mid y\in Y\}$ for $x\in {}^\omega 2$ and $Y\subseteq {}^\omega 2$. 
Let $R$ and $M$ be as in the statement of the claim and suppose that $R$ is not meager. Since $R$ has the property of Baire, there is some $t\in 2^{<\omega}$ such that $R\cap N_t$ is comeager in $N_t$, where $N_t = \{ s \in {}^\omega 2 \mid s \text{ extends } t \}$. 
Let $x\in N_{0^{|t|}}\setminus R$, in particular $x \neq 0$. Then $R\cap N_t$ and $x+(R\cap N_t)$ are disjoint as if $y \in (R\cap N_t) \cap (x+(R\cap N_t))$ then there is some real $z \in R\cap N_t$ such that $y = x+z$. But then $x = y+z \in R$ as $R={}^\omega 2\cap M$ and hence closed under addition. This contradicts our choice of $x \notin R$. Note that $x+(R\cap N_t)$ is comeager in $N_t$ as $R\cap N_t$ is comeager in $N_t$, ``$+ x$'' is a homeomorphism and $0^{|t|}$ is a subsequence of $x$. But this contradicts the fact that $R\cap N_t$ and $x+(R\cap N_t)$ are disjoint subsets of $N_t$. 
\end{proof} 

 It is well-known that the set of ground model reals is not meager in $\Add(\omega,\omega_1)$-generic extensions, see \cite{Ku84}. Therefore, using the claim, the set of ground model reals in $\Add(\omega,\omega_1)$-generic extensions cannot have the property of Baire. Recall that by Theorem \ref{characterization of internal absoluteness via 1-step absoluteness and regularity} and the remark before the statement of Theorem \ref{characterization of internal absoluteness via 1-step absoluteness and regularity} in the introduction, internal projective $\CC$-absoluteness implies that all projective sets have the property of Baire. See also \cite[Lemma 5.7]{MSSW} for a direct proof of this fact. So no non-trivial $\Add(\omega,\omega_1)$-generic extension of $L$ satisfies internal projective $\CC$-absoluteness as the set of ground model reals is projective if the ground model is $L$. In addition, it is easy to see that projective Cohen $2$-step absoluteness holds in these generic extensions.
\end{proof}

Using the claim in the previous proof, 
we can obtain some additional consequences of internal projective $\CC$-absoluteness. It implies, for example, that for any real $x$, $L[x]$ is meager and there is a Cohen real over $L[x]$. Moreover, it implies by \cite[Lemma 4]{Wo82} that if $\omega_1=\omega_1^L$, there is a real $x$ such that there is no random real over $L[x]$. 

Nevertheless, internal projective $\CC$ can be forced over a $\ZFC$ model. For example, it holds by Theorem \ref{characterization of internal absoluteness via 1-step absoluteness and regularity} in Shelah's model for uniformization up to a meager set \cite{Sh84, JR93}.

\subsection{Random forcing}

In this section, we discuss the consistency strength of internal projective $\RR$-absoluteness for random forcing $\RR$. Recall that 2-step absoluteness for random forcing can be forced over $\ZFC$; it holds, for example, in a generic extension by the random algebra for uncountably many generators. But by Theorem \ref{characterization of internal absoluteness via 1-step absoluteness and regularity}, internal projective $\RR$-absoluteness implies that all projective sets of reals are Lebesgue measurable. In particular, an inaccessible cardinal is a consistency strength lower bound for internal projective $\RR$-absoluteness. Theorem \ref{characterization of internal absoluteness via 1-step absoluteness and regularity} combined with results of Solovay show that this lower bound is optimal: Internal projective $\RR$-absoluteness is equivalent to projective uniformization up to a null set. Solovay showed that in his famous model after collapsing an inaccessible cardinal projective uniformization up to a null set holds \cite{So69}. We summarize this discussion in the following lemma.

\begin{lemma}
The consistency strength of internal projective $\RR$-absoluteness is an inaccessible cardinal. In particular, internal projective $\RR$-absoluteness does not follow from projective 2-step absoluteness for $\RR$.
\end{lemma}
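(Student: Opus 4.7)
The plan is to use Theorem \ref{characterization of internal absoluteness via 1-step absoluteness and regularity} to reduce the problem to statements about Lebesgue measurability and uniformization up to the null ideal, and then invoke classical results of Solovay and Shelah for the two bounds.

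For the upper bound, I would take a ground model containing an inaccessible cardinal $\kappa$ and pass to the Solovay model obtained by collapsing $\kappa$ to $\omega_1$. Solovay showed in \cite{So69} that in this model every projective relation on $\mathbb{R}$ can be uniformized by a Borel function defined off a null set, which is a strengthening of projective uniformization up to the null ideal (in the sense of Definition \ref{def:projective uniformization up to I} with $I$ the null ideal, equivalently with $\PP_I=\RR$). By Theorem \ref{characterization of internal absoluteness via 1-step absoluteness and regularity} applied to $\PP_I=\RR$ (which is proper, indeed c.c.c.), this is equivalent to internal projective $\RR$-absoluteness. Thus the Solovay model witnesses consistency of internal projective $\RR$-absoluteness relative to an inaccessible cardinal.

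For the lower bound, I would argue that internal projective $\RR$-absoluteness implies the existence of an inner model with an inaccessible. By the implication \ref{characterization of internal absoluteness via 1-step absoluteness and regularity 1} $\Rightarrow$ \ref{characterization of internal absoluteness via 1-step absoluteness and regularity 3} of Theorem \ref{characterization of internal absoluteness via 1-step absoluteness and regularity}, every projective set is $I$-measurable for $I$ the null ideal, i.e., every projective set is Lebesgue measurable. By the classical theorem of Shelah (see \cite{Sh84}), if every $\boldsymbol{\Sigma}^1_3$ set of reals is Lebesgue measurable, then $\omega_1$ is inaccessible in $L[x]$ for every real $x$. In particular, there is an inner model with an inaccessible cardinal, which suffices for the desired lower bound.

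For the second sentence, I would note that projective 2-step absoluteness for $\RR$ can be forced over any model of $\ZFC$ with no large cardinal assumption: it holds for instance in a generic extension obtained by forcing with the measure algebra on $2^{\omega_1}$ by a homogeneity/product argument, where sufficiently many Solovay-style absoluteness considerations yield full projective 2-step random absoluteness (alternatively, one can cite the standard fact that random forcing satisfies a high degree of generic absoluteness in such models). Hence projective 2-step absoluteness for $\RR$ is strictly weaker in consistency strength than internal projective $\RR$-absoluteness, and so the former cannot imply the latter. The only delicate point here is ensuring the 2-step version really is forceable from $\ZFC$ alone, but this follows by standard arguments as in \cite{BaJu95}.
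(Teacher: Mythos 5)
Your proposal is correct and follows essentially the same route as the paper: the upper bound via Solovay's model and the equivalence of internal projective $\RR$-absoluteness with projective uniformization up to a null set (Theorem \ref{characterization of internal absoluteness via 1-step absoluteness and regularity}), the lower bound via Lebesgue measurability of projective sets together with Shelah's theorem that ${\bf\Sigma}^1_3$ Lebesgue measurability makes $\omega_1$ inaccessible in $L[x]$, and the observation that $2$-step random absoluteness is forceable over $\ZFC$ alone. The only difference is that you spell out the Shelah step explicitly, which the paper leaves implicit.
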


\section{Open questions} 

We close this paper with two questions about projective uniformization up to small sets. In Solovay's model projective uniformization holds up to meager sets and up to null sets. This raises the next question. 

\begin{question} 
In Solovay's model, does projective uniformization up to $I$ hold for every $\sigma$-ideal $I$ such that $\PP_I$ is proper? 
\end{question}

As uniformization up to $I$ implies $I$-measurability, this would imply a positive answer to the question whether all sets are $I$-measurable in Solovay's model. 
This was asked by Khomskii, Ikegami and others (see \cite[Question 6.3]{Ikegami2021}).\footnote{An answer is claimed in \cite[Proposition 2.2.8]{Kh12}, but Ikegami found a gap \cite[Section 6]{Ikegami2021}.} 


We have seen in Lemma \ref{Cohen 2-step absoluteness does not imply internal Cohen absoluteness} that internal projective absoluteness for Cohen forcing does not follow from 2-step projective Cohen absoluteness. In terms of regularity properties, it is natural to conjecture that projective uniformization up to a meager set does not follow from the fact that all projective sets have the property of Baire. Surprisingly, this is open. 

\begin{question}
Does the Baire property for all projective sets of reals imply projective uniformization up to meager sets?
\end{question}

Shelah has shown that both the Baire property for all projective sets and projective uniformization up to meager set are consistent relative to $\ZFC$ \cite{Sh84, JR93}. He produced two different models for this and it is not clear whether, for example, projective uniformization up to a meager set fails in his first model for the Baire property.




\bibliographystyle{plainnat}
\bibliography{ref} 

\begin{thebibliography}{35}
\providecommand{\natexlab}[1]{#1}
\providecommand{\url}[1]{\texttt{#1}}
\expandafter\ifx\csname urlstyle\endcsname\relax
  \providecommand{\doi}[1]{doi: #1}\else
  \providecommand{\doi}{doi: \begingroup \urlstyle{rm}\Url}\fi

\bibitem[Bagaria()]{Ba}
Joan Bagaria.
\newblock {\emph{Generic Absoluteness}}.
\newblock Book draft.

\bibitem[Bagaria(1991)]{Ba91}
Joan Bagaria.
\newblock \emph{{Definable forcing and regularity properties of projective sets
  of reals}}.
\newblock PhD thesis, University of California at Berkeley, 1991.

\bibitem[Bagaria(2006)]{Ba06}
Joan Bagaria.
\newblock Axioms of generic absoluteness.
\newblock In \emph{Logic {C}olloquium '02}, volume~27 of \emph{Lect. Notes
  Log.}, pages 28--47. Assoc. Symbol. Logic, La Jolla, CA, 2006.

\bibitem[Bartoszy\'{n}ski and Judah(1995)]{BaJu95}
Tomek Bartoszy\'{n}ski and Haim Judah.
\newblock \emph{Set theory: On the Structure of the Real Line}.
\newblock CRC Press, 1995.

\bibitem[Brendle(1993)]{Br93}
Jörg Brendle.
\newblock {Amoeba-absoluteness and projective measurability}.
\newblock \emph{J. Symbolic Logic}, 58\penalty0 (4):\penalty0 1284--1290, 1993.

\bibitem[Castiblanco and Schlicht(2021)]{castiblanco2021preserving}
Fabiana Castiblanco and Philipp Schlicht.
\newblock Preserving levels of projective determinacy by tree forcings.
\newblock \emph{Ann. Pure Appl. Logic}, 172\penalty0 (4):\penalty0 102918,
  2021.

\bibitem[Chan and Magidor(2022)]{chan2022relation}
William Chan and Menachem Magidor.
\newblock When a relation with all {B}orel sections will be {B}orel somewhere?
\newblock \emph{Proc. Am. Math. Soc.}, 150\penalty0 (02):\penalty0 833--847,
  2022.

\bibitem[Elekes and P\'alfy()]{ElekesPalfy2021}
M\'arton Elekes and M\'at\'e P\'alfy.
\newblock On various notions of universally {B}aire sets and reflection of
  non-{B}aire property in compact sets.
\newblock In preparation, 2022.

\bibitem[Feng et~al.(1992)Feng, Magidor, and Woodin]{FeWo92}
Qi~Feng, Menachem Magidor, and Hugh Woodin.
\newblock Universally {B}aire sets of reals.
\newblock In \emph{Set theory of the continuum ({B}erkeley, {CA}, 1989)},
  volume~26 of \emph{Math. Sci. Res. Inst. Publ.}, pages 203--242. Springer,
  New York, 1992.

\bibitem[Halbeisen and Judah(1996)]{HaJu96}
Lorenz Halbeisen and Haim Judah.
\newblock {Mathias absoluteness and the Ramsey property}.
\newblock \emph{J. Symbolic Logic}, 61\penalty0 (1):\penalty0 177--194, 1996.

\bibitem[Hauser(1995)]{Ha95}
Kai Hauser.
\newblock The consistency strength of projective absoluteness.
\newblock \emph{Ann. Pure and Appl. Logic}, 74\penalty0 (3):\penalty0 245 --
  295, 1995.

\bibitem[Ikegami(2009)]{ikegami2009}
Daisuke Ikegami.
\newblock Projective absoluteness for {S}acks forcing.
\newblock \emph{Arch. Math. Logic}, 48:\penalty0 679--690, 2009.

\bibitem[Ikegami(2010)]{ikegami2010}
Daisuke Ikegami.
\newblock Forcing absoluteness and regularity properties.
\newblock \emph{Ann. Pure Appl. Logic}, 161\penalty0 (7):\penalty0 879--894,
  2010.

\bibitem[Ikegami(Appeared online, 2022.)]{Ikegami2021}
Daisuke Ikegami.
\newblock Determinacy and regularity properties for idealized forcings.
\newblock \emph{Math. Logic Quart.}, Appeared online, 2022.
\newblock \url{https://doi.org/10.1002/malq.202100045}.

\bibitem[Jech(2003)]{Je03}
Thomas Jech.
\newblock \emph{Set theory}.
\newblock Springer Monographs in Mathematics. Springer, 2003.

\bibitem[Judah(1993)]{Ju93}
Haim Judah.
\newblock Absoluteness for projective sets.
\newblock In J.~Oikkonen and J.~Väänänen, editors, \emph{Logic Colloquium
  '90: ASL Summer Meeting in Helsinki}, volume~2 of \emph{Lecture Notes in
  Logic}, pages 145--154. Cambridge University Press, 1993.

\bibitem[Judah and Roslanowski(1993)]{JR93}
Haim Judah and Andrzej Roslanowski.
\newblock {On Shelah's amalgamation}.
\newblock In Haim Judah, editor, \emph{Set Theory of the Reals}, Israel
  Mathematical Conference Proceedings, pages 385--414. American Mathematical
  Society, 1993.

\bibitem[Kanovei and Lyubetsky()]{kanovei2018intermediate}
Vladimir Kanovei and Vassily Lyubetsky.
\newblock On intermediate extensions of generic extensions by a random real.
\newblock Preprint, 2018. \url{https://arxiv.org/abs/1811.10568}.

\bibitem[Kechris(2012)]{kechris2012classical}
Alexander Kechris.
\newblock \emph{Classical descriptive set theory}.
\newblock Springer, 2012.

\bibitem[Khomskii(2012)]{Kh12}
Yurii Khomskii.
\newblock \emph{Regularity properties and definability in the real number
  continuum}.
\newblock PhD thesis, Universiteit van Amsterdam, 2012.

\bibitem[Kunen(1984)]{Ku84}
Kenneth Kunen.
\newblock {Random and Cohen reals}.
\newblock In Kenneth Kunen and Jerry~E. Vaughan, editors, \emph{Handbook of
  set-theoretic topology}, pages 887--911. 1984.

\bibitem[Müller et~al.()Müller, Schlicht, Schrittesser, and Weinert]{MSSW}
Sandra Müller, Philipp Schlicht, David Schrittesser, and Thilo Weinert.
\newblock Lebesgue's density theorem and definable selectors for ideals.
\newblock \emph{Israel J. Math.}
\newblock Appeared online, 2022.
  \url{https://doi.org/10.1007/s11856-022-2312-8}.

\bibitem[Neeman and Norwood()]{NeemanNorwood2019}
Itay Neeman and Zach Norwood.
\newblock Coding along trees and generic absoluteness.
\newblock Preprint.

\bibitem[Schilling and Vaught(1983)]{MR704624}
Kenneth Schilling and Robert Vaught.
\newblock Borel games and the {B}aire property.
\newblock \emph{Trans. Amer. Math. Soc.}, 279\penalty0 (1):\penalty0 411--428,
  1983.

\bibitem[Schindler(2001)]{MR1856755}
Ralf Schindler.
\newblock Proper forcing and remarkable cardinals {II}.
\newblock \emph{J. Symbolic Logic}, 66\penalty0 (3):\penalty0 1481--1492, 2001.

\bibitem[Schindler(2014)]{Sch14}
Ralf Schindler.
\newblock \emph{Set theory: exploring independence and truth}.
\newblock Universitext. Springer, 2014.

\bibitem[Schrittesser and T{\"o}rnquist(2019)]{schrittesser2019ramsey}
David Schrittesser and Asger T{\"o}rnquist.
\newblock The {R}amsey property implies no mad families.
\newblock \emph{Proc. Natl. Acad. Sci.}, 116\penalty0 (38):\penalty0
  18883--18887, 2019.

\bibitem[Shelah(1984)]{Sh84}
Saharon Shelah.
\newblock {Can you take Solovay's inaccessible away?}
\newblock \emph{Israel J. Math.}, 48\penalty0 (1):\penalty0 1--47, 1984.

\bibitem[Solovay(1969)]{So69}
Robert~M. Solovay.
\newblock {The cardinality of $\Sigma^1_2$ sets of reals}.
\newblock In J.~J. Bulloff, T.~C. Holyoke, and S.~W. Hahn, editors,
  \emph{Symposium Papers Commemorating the Sixtieth Birthday of Kurt Gödel},
  pages 58--73. Springer Berlin Heidelberg, 1969.

\bibitem[Solovay(1970)]{MR0265151}
Robert~M. Solovay.
\newblock A model of set-theory in which every set of reals is {L}ebesgue
  measurable.
\newblock \emph{Ann. of Math. (2)}, 92:\penalty0 1--56, 1970.

\bibitem[Steel(2009)]{St09}
John~R. Steel.
\newblock The derived model theorem.
\newblock In S.~Barry Cooper, Herman Geuvers, Anand Pillay, and Jouko
  V\"a\"an\"anen, editors, \emph{Logic Colloquium 2006}, Lecture Notes in
  Logic, pages 280--327. Cambridge University Press, 2009.

\bibitem[Wilson()]{Wi}
Trevor Wilson.
\newblock {On forcing projective generic absoluteness from strong cardinals}.
\newblock Preprint, 2018. \url{https://arxiv.org/abs/1807.02206}.

\bibitem[Wilson(2017)]{Wi17}
Trevor Wilson.
\newblock Universally {B}aire sets and generic absoluteness.
\newblock \emph{J. Symbolic Logic}, 82\penalty0 (4):\penalty0 1229--1251, 2017.

\bibitem[Woodin(1982)]{Wo82}
W.~Hugh Woodin.
\newblock {On the consistency strength of projective uniformization}.
\newblock In J.~Stern, editor, \emph{Proceedings of the Herbrand symposium.
  Logic Colloquium '81}, pages 365--383. North-Holland, 1982.

\bibitem[Zapletal(2008)]{MR2391923}
Jind\v{r}ich Zapletal.
\newblock \emph{Forcing idealized}, volume 174 of \emph{Cambridge Tracts in
  Mathematics}.
\newblock Cambridge University Press, Cambridge, 2008.

\end{thebibliography}

\end{document}